\documentclass[a4paper,12pt]{amsart}
\usepackage[a4paper,margin=2.4cm]{geometry}
\usepackage{hyperref}
\usepackage{color}
\usepackage{amssymb}
\usepackage{amsmath}
\usepackage{mathrsfs}
\usepackage{cancel}
\usepackage{xcolor}
\allowdisplaybreaks[4]
\usepackage[dvipsnames]{xcolor}
\usepackage{listings}
\usepackage{xcolor}

\definecolor{codebg}{RGB}{245, 245, 240}       
\definecolor{codezebra}{RGB}{238, 238, 233}    
\definecolor{codecomment}{RGB}{0, 160, 0}      
\definecolor{codekeyword}{RGB}{230, 0, 115}    
\definecolor{codestring}{RGB}{128, 0, 128}     
\definecolor{codenumbers}{RGB}{150, 150, 150}   

\AtEndDocument{\vfill\eject\batchmode}
\newtheorem{Theorem}{Theorem}[section]

\newtheorem{lemma}[Theorem]{Lemma}
\newtheorem{prop}[Theorem]{Proposition}

\newtheorem{defn}{Definition}[section]

\newtheorem{rem}[Theorem]{Remark}

\newtheorem{theoremA}{Theorem}

\newtheorem{corollaryA}{Corollary}

\newcounter{numl}
\newcommand{\labelnuml}{\textup{(\roman{numl})}}

\DeclareSymbolFont{script}{U}{eus}{m}{n}
\DeclareSymbolFontAlphabet{\amathscr}{script}
\DeclareMathSymbol{\Wedge}{0}{script}{"5E}
\DeclareMathAlphabet{\mathrmsl}{OT1}{cmr}{m}{sl}

\newcommand{\Scal}{\mathrm{Scal}}
\renewcommand{\P}{\mathbb{P}}
\newcommand{\T}{\mathbb{T}}

\makeatletter
\newsavebox{\@brx}
\newcommand{\llangle}[1][]{\savebox{\@brx}{\(\m@th{#1\langle}\)}%
  \mathopen{\copy\@brx\kern-0.5\wd\@brx\usebox{\@brx}}}
\newcommand{\rrangle}[1][]{\savebox{\@brx}{\(\m@th{#1\rangle}\)}%
  \mathclose{\copy\@brx\kern-0.5\wd\@brx\usebox{\@brx}}}
\makeatother

\begin{document}

\title[New extremal K\"ahler metrics]{New extremal K\"ahler metrics on projective bundles}
\author{Simon Jubert}

\date{\today}

\begin{abstract}
Consider a holomorphic vector bundle $E$ over a compact complex curve $C$ which decomposes as a sum of stable vector bundles. For the projectivization $\mathbb{P}(E)$, we prove that the existence of a compatible extremal almost K\"ahler (aK) metric of involutive type in the sense of Lejmi is equivalent to the existence of a Calabi extremal K\"ahler metric. This result rests on the Yau--Tian--Donaldson correspondence in terms of the moment polytope $\Delta$ for $\mathbb{P}(E)$, proved by the author and Yin in a previous work. The main advantage is that compatible extremal aK metrics of involutive type are solutions to a second-order linear PDE, rather than a fourth-order nonlinear PDE for Calabi's extremal K\"ahler metrics.

As an application, we prove that when $E$ has rank $4$ and $C$ is an elliptic curve or the projective line, $\mathbb{P}(E)$ is a Calabi dream manifold, i.e. admits an extremal K\"ahler metric in every K\"ahler class.

\end{abstract}

\maketitle

\section{Introduction}

The search for canonical metrics on a given compact K\"ahler manifold $(M,J,\omega)$ is a central problem in K\"ahler geometry. In the 1980s, Calabi \cite{Cal82} proposed the problem of finding an extremal Kähler metric within a fixed Kähler class $[\omega]$. Extremal Kähler metrics naturally generalize constant scalar curvature Kähler (cscK) metrics and are defined by the property that the hamiltonian vector field generated by their scalar curvature is holomorphic.

Following several decades of effort (see e.g. \cite{CC21c, CDS12, DR, Tia91, Yau78}), it is now established \cite{BJ25, DZ25, Li22, Tru26} that the existence of an extremal K\"ahler metric can be characterized by an algebraic condition called \textit{relative uniform K-stability} of $(M,J,[\omega])$. This result is known as the Yau--Tian--Donaldson (YTD) correspondence.

The algebraic objects involved in the definition of relative uniform K-stability are called \textit{test configurations} or, more generally, \textit{models}, which are certain degenerations of the manifold over the projective line. By definition, relative uniform K-stability requires a certain positivity condition when evaluating the \textit{Donaldson--Futaki invariant} on these degenerations.

For a general Kähler manifold, the notion of relative uniform K-stability is sophisticated and remains hard to test on explicit examples. In particular, the problem of determining whether an extremal metric exists in a given Kähler class remains widely open.

\smallskip

Several strategies have been developed to test stability and thereby understand the existence of an extremal K\"ahler metric. One active line of inquiry is the use of the delta invariant, defined as a $K$-stability threshold \cite{BJ20, FO18}. Several equivalent definitions have been provided, originating from both analytic and algebraic perspectives; see e.g. \cite{ABS25, DJ25, RTZ21, Z21}.

In this work, we use a different approach based on a \textit{toric symplectic viewpoint} and a suitable notion of stability for \textit{moment polytopes}, following \cite{ACGT06, ACGT11, Don02}.

\smallskip

On a certain class of Kähler manifolds with a large symmetry group, the relative uniform K-stability of a Kähler class $[\omega]$, restricted to a specific subclass of test configurations, translates into a stability condition on a moment polytope $\Delta$. The fundamental idea was introduced by Donaldson in the case of toric manifolds \cite{Don02}, where he defined \textit{toric test configurations} as degenerations associated with certain piecewise-linear convex functions on $\Delta$. The works of Chen--Cheng and He \cite{CC21a, He19}, building on several previous efforts (see e.g. \cite{Ab08, Gui, ZZ}), established that testing stability solely on toric test configurations suffices to guarantee the existence of an extremal K\"ahler metric in the corresponding Kähler class. Several instances of this phenomenon have been discovered, notably for spherical manifolds by Delcroix \cite{Del23}, various toric fibrations \cite{ACGT08, ACGT11, Jub23, LLS21}, and, most importantly for our purposes, projective bundles over curves by the author and Yin \cite{JY26}.

The idea of this work is to relate the stability of the moment polytope $\Delta$ to the existence of a certain class of canonical metrics on an almost K\"ahler manifold, whose existence can be effectively verified in practice.

\smallskip

Recall that an almost Kähler manifold $(M,J,\omega)$ is a symplectic manifold $(M,\omega)$ endowed with a compatible almost complex structure $J$. In \cite{Lej10}, Lejmi introduced the notion of an \textit{extremal almost Kähler metric} on a symplectic manifold $(M,\omega)$. More precisely, a compatible almost complex structure $J$ on $(M,\omega)$ induces a hermitian connection $\nabla$. From the connection $\nabla$, one can define the hermitian scalar curvature $\mathrm{Scal}^{\nabla}(J)$ associated with the almost Kähler structure $(M,J,\omega)$.
Hence, $(M,J,\omega)$ is said to be extremal if the hamiltonian vector field of its hermitian scalar curvature preserves $J$.

\smallskip
We now describe the extremal almost K\"ahler equation for the manifolds of interest in this paper. We consider the projectivization $\mathbb{P}(E)$ of a holomorphic vector bundle $E$ over a  smooth compact complex curve $C$. By \cite[Theorem 2]{AK19}, a necessary condition for the existence of an extremal metric is that $E$ splits as a direct sum of $\ell+1$ stable (or, equivalently, projectively flat) vector bundles. Throughout this paper, we therefore assume without loss of generality that $E$ is of this form. This splitting induces an action of the torus $\mathbb{T} \cong (S^1)^\ell$ on $\mathbb{P}(E)$. We fix a Kähler class $[\tilde{\omega}_0]$ on $\mathbb{P}(E)$ and, without loss of generality, assume that $[\tilde{\omega}_0]$ restricts to the Kähler class of the Fubini--Study metric on each fiber. Then, the moment polytope $\Delta$ associated with $(\mathbb{P}(E),[\tilde{\omega}_0],\mathbb{T})$ is the standard $\ell$-simplex in $\mathbb{R}^\ell$.

In \cite{ACGT11}, the authors introduced the notion of compatible almost Kähler metrics of involutive type on the projective bundle $\mathbb{P}(E)$. By definition, these are almost Kähler metrics induced by smooth matrix-valued functions $\mathbf{H}$ on $\Delta$. This notion was first introduced for toric manifolds \cite{Don02, Lej10}. The associated almost K\"ahler metric is then extremal if and only if $\mathbf{H}=(H_{ij})_{i,j=1}^\ell$ satisfies the second-order linear PDE
\begin{equation}{\label{i:ext}}
\sum_{i,j=1}^\ell (H_{ij}v)_{,ij} = vw,
\end{equation}
where $v$ and $w$ are smooth functions on the polytope determined by the topological data of $\mathbb{P}(E)$ such that $vw$ is polynomial, and the index notation $,ij$ denotes second partial derivatives with respect to a fixed basis.

The relationship between relative uniform K-stability and the existence of extremal almost Kähler metrics on a manifold admitting a Kähler structure remains a largely open question. In the case of toric manifolds, Legendre \cite{Leg19} proved that the existence of an extremal almost Kähler metric of involutive type implies the stability of the moment polytope, a result that was later generalized to semisimple principal fibrations by the author \cite{Jub23}.

\smallskip

Our main theorem is the following.

\begin{theoremA}{\label{t:A}}
Consider a holomorphic vector bundle $E \to C$ which decomposes as a direct sum of stable vector bundles. If there exists a compatible extremal almost Kähler metric of involutive type on $\mathbb{P}(E)$, then $\Delta$ is $(v,w)$-uniformly K-stable.
\end{theoremA}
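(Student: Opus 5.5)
Following Legendre's argument for toric manifolds \cite{Leg19} and its extension in \cite{Jub23}, I will show that the solution $\mathbf{H}$ of \eqref{i:ext} gives a positive lower bound for the $(v,w)$-Donaldson--Futaki functional
\[
\mathcal{F}_{v,w}(f)=\int_{\partial\Delta} f\, v\, d\sigma-\int_\Delta f\, vw\, dx
\]
on convex (piecewise-linear, then general) functions $f$ on $\Delta$, normalized relative to affine functions. Here $d\sigma$ is the canonical boundary measure.

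\textbf{Step 1: integration by parts.} Let $\mathbf{H}$ be smooth on $\Delta$ and satisfy the first-order boundary conditions of \cite{ACGT11, Don02}. These say $H_{ij}u_j=0$ on each facet with normal $u$, and the normal derivative of $\mathbf{H}u$ equals $2u$ with the appropriate normalization. For smooth $f$, two integrations by parts turn $\int_\Delta f\,vw\,dx=\int_\Delta f\sum_{i,j}(H_{ij}v)_{,ij}\,dx$ into
\[
\int_{\partial\Delta} f\,v\,d\sigma-\int_\Delta \sum_{i,j} H_{ij} f_{,ij}\,v\,dx .
\]
The boundary terms reduce to $\int_{\partial\Delta}fv\,d\sigma$ by the boundary conditions, exactly as in \cite{Don02}. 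Hence
\[
\mathcal{F}_{v,w}(f)=\int_\Delta \mathrm{tr}(\mathbf{H}\,\mathrm{Hess} f)\,v\,dx .
\]
For convex $f$ this is $\ge 0$, because $\mathbf{H}$ is positive definite in the interior and $v>0$ there. The identity extends to continuous convex $f$ by approximation: $\mathrm{Hess} f$ is then a positive matrix-valued measure, and the pairing with the continuous $\mathbf{H}v$ still makes sense.

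\textbf{Step 2: uniformity.} To pass from semistability to uniform stability I need $\mathcal{F}_{v,w}(f)\ge\lambda\|f\|_{1}$ for normalized convex $f$. Legendre's trick is to use a nearby matrix. On a compact subset $K$ of the interior, $\mathbf{H}\ge c\,\mathrm{Id}$. More usefully, $\mathbf{H}$ dominates $\varepsilon \mathbf{H}_0$, where $\mathbf{H}_0$ is a fixed explicit matrix satisfying the same boundary conditions, for instance the one induced by the Fubini--Study metric on the simplex fibre.

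Write $\mathcal{F}_{v,w}(f)=\int \mathrm{tr}((\mathbf{H}-\varepsilon\mathbf{H}_0)\mathrm{Hess}f)\,v+\varepsilon\int\mathrm{tr}(\mathbf{H}_0\mathrm{Hess}f)\,v$. The first term is $\ge0$ once $\mathbf{H}-\varepsilon\mathbf{H}_0\ge0$. The second term equals $\varepsilon\int_{\partial\Delta} fv\,d\sigma-\varepsilon\int_\Delta f\,(\text{explicit poly})\,dx$. On the simplex with normalized $f$ ($f\ge f(x_0)=0$, $df(x_0)=0$ at a chosen interior point), this dominates $\int_{\partial\Delta}fv\,d\sigma$ up to a constant. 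That in turn controls $\|f\|_{L^1}$, by the standard estimate of \cite{Don02}.

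\textbf{Main obstacle.} The delicate point is the comparison $\mathbf{H}\ge\varepsilon\mathbf{H}_0$ near the boundary, where both degenerate. Both vanish on facets in the normal direction with the same first-order behaviour, since they share the boundary conditions. So the ratio should be controlled by comparing them on each face and using the tangential positivity of $\mathbf{H}$ restricted to faces, which is part of the compatibility/smoothness assumption.

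I would carry this out vertex by vertex in the local affine coordinates adapted to the simplex, where $\mathbf{H}_0$ is diagonal-like. There I would check $\mathbf{H}-\varepsilon\mathbf{H}_0\ge0$ by a continuity/compactness argument on the compactified quotient. Alternatively, in the spirit of \cite{Leg19}, I would bypass this by using $v$-weighted versions that need positivity only on the interior, together with a direct argument that a normalized $f$ with $\mathcal{F}_{v,w}(f)=0$ must be affine.
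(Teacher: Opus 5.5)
Your Step~2 does not close the argument; it moves the difficulty onto $\mathbf{H}_0$. By the same integration by parts as in Step~1, $\int_\Delta \mathrm{tr}(\mathbf{H}_0\,\mathrm{Hess} f)\,v\,dx=\mathcal{F}_{v,w_0}(f)$, where $w_0$ is the $v$-weighted scalar curvature of $\mathbf{H}_0$, i.e.\ $vw_0$ is the weighted Abreu operator applied to $\mathbf{H}_0$. Your claim that this term ``dominates $\int_{\partial\Delta}fv\,d\sigma$ up to a constant'' for all normalized convex $f$ is therefore a uniform $(v,w_0)$-stability inequality for $\Delta$. That is the theorem you are proving, with $w$ replaced by $w_0$. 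Taking $\mathbf{H}_0$ to be the Fubini--Study matrix does not make it automatic. The interior term $\int_\Delta f\,vw_0\,dx$ has no sign and is not small, and $w_0$ is not constant once $v$ is present. Absorbing that term is a genuine stability estimate, and you assert it without proof. By contrast, the comparison $\mathbf{H}\ge\varepsilon\mathbf{H}_0$ that you call the main obstacle is immediate. For fundamental fields $X_\xi$ one has $\mathbf{H}(\xi,\xi)=g_J(X_\xi,X_\xi)$ and $\mathbf{H}_0(\xi,\xi)=g_{J_0}(X_\xi,X_\xi)$, and two smooth metrics on the compact manifold $\mathbb{P}^\ell$ are uniformly equivalent.

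Second, you are aiming at the wrong norm. The weight $v=\prod_j L_j^{d_j-1}\big(c-\sum_j\mu(E_j)L_j\big)$ vanishes on every facet with $d_j>1$. If all $d_j\ge 2$, then $\int_{\partial\Delta}fv\,d\sigma=0$ for every $f$, so no Donaldson-type estimate can control $\|f^*v\|_{L^1}$ through it. The coercive quantity has to include the positive interior part $\sum_j\int_\Delta \frac{2d_j(d_j-1)}{L_j}fv\,dx$ of $-\int_\Delta fvw\,dx$; this gives the functional $\mathcal{F}^+_v$ of \eqref{+norm}.

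The paper proves $\mathcal{F}_{v,w}(f)\ge\lambda\,\mathcal{F}^+_v(f^*)$ by contradiction, following Chen--Li--Sheng. Take a sequence with $\mathcal{F}^+_v(f_k^*)=1$ and $\mathcal{F}_{v,w}(f_k)\to0$. The new compactness result, Proposition~\ref{p:cpt}, gives a subsequence converging in $L^1(v\,dx)$ to some $f^*\in\mathcal{CV}^*(\Delta)$; there the $1/L_j$ terms control the mass near facets on which $v$ vanishes. The identity $\mathcal{F}_{v,w}(f)=\int_\Delta\mathrm{tr}(\mathbf{H}\,\mathrm{Hess}f)\,v\,dx$ and the interior-segment argument of Chen--Li--Sheng force $f^*$ to be affine, hence $f^*\equiv0$. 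On the other hand, the $L^1(v\,dx)$ convergence gives $\int_\Delta f^*\big(\ell_{\mathrm{ext}}-\tfrac{4(1-\mathbf{g})}{c-\sum_j\mu(E_j)L_j}\big)v\,dx=1$, a contradiction.

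Your closing alternative, showing that a normalized $f$ with $\mathcal{F}_{v,w}(f)=0$ must be affine, is the right local ingredient. On its own, though, it only gives strict positivity, not uniformity. What is missing is this compactness step, with $\mathcal{F}^+_v$ rather than the boundary integral as the controlling quantity.
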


The proof generalizes the argument of \cite{CLS14} from the unweighted Kähler setting. Compared with the author's previous generalization to semisimple principal fibrations in \cite{Jub23}, the argument requires a further modification, since in the present setting the weight $v$ may vanish along the boundary of the polytope. The main new technical ingredient is a compactness result with respect to the $L^1$ topology associated with possibly vanishing weighted measures for a certain class of convex functions on $\Delta$.

\smallskip

Donaldson conjectured \cite{Don02} that on toric manifolds the existence of an extremal almost K\"ahler metric is equivalent to the existence of an extremal K\"ahler metric. This conjecture was confirmed in \cite{Leg19} and proved for certain toric fibrations in \cite{Jub23}. As a consequence of the above theorem and the YTD correspondence for projective bundles over a curve \cite{JY26} in term of $\Delta$, we obtain new examples of this phenomenon.

\begin{corollaryA}{\label{c:A}}
Consider a holomorphic vector bundle $E \to C$ which decomposes as a direct sum of stable vector bundles. Then the following statements are equivalent.

\begin{enumerate}
\item There exists an extremal K\"ahler metric $\tilde{\omega}$ in $(\mathbb{P}(E),[\tilde{\omega}_0])$.
\item There exists a compatible involutive extremal aK metric $\tilde{J}$ on $(\mathbb{P}(E),\tilde{\omega}_0)$.
\item $\Delta$ is $(v,w)$-uniformly K-stable.
\end{enumerate}
\end{corollaryA}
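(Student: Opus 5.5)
The plan is to prove the cycle of implications $(1)\Rightarrow(2)\Rightarrow(3)\Rightarrow(1)$. Both $(3)\Rightarrow(1)$ and the implication from $(1)$ to $(3)$ come from the Yau--Tian--Donaldson correspondence for projective bundles over curves proved in \cite{JY26}. That result states that $(\mathbb{P}(E),[\tilde{\omega}_0])$ carries an extremal K\"ahler metric if and only if the simplex $\Delta$ is $(v,w)$-uniformly K-stable. I will quote it as stated, after checking that the weights $v,w$ used there agree with those in \eqref{i:ext}. The implication $(2)\Rightarrow(3)$ is exactly Theorem~\ref{t:A}.

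It remains to obtain $(1)\Rightarrow(2)$. Suppose there is an extremal K\"ahler metric in $[\tilde{\omega}_0]$. The first step is to make it $\mathbb{T}$-invariant. Up to automorphism, the extremal metric is invariant under a maximal compact subgroup of the reduced automorphism group (Calabi, and later Berman--Berndtsson/Chen--Paun--Zeng). That subgroup can be chosen to contain the torus $\mathbb{T}$ induced by the splitting of $E$. By Moser's lemma, we may then pull the metric back by a $\mathbb{T}$-equivariant diffeomorphism so that its K\"ahler form is $\tilde{\omega}_0$ itself. This gives a compatible complex structure $\tilde{J}$ on $(\mathbb{P}(E),\tilde{\omega}_0)$.

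The second step is to show that $\tilde{J}$ is of involutive type in the sense of \cite{ACGT11}, that is, that it is described by a matrix $\mathbf{H}$ on $\Delta$. This is the step I expect to be the main obstacle. The point is that a $\mathbb{T}$-invariant K\"ahler metric on $\mathbb{P}(E)$ need not a priori be compatible with the fixed projectively flat connection and the fibration structure over $C$. I would first try to show, using the stability of the summands and the uniqueness of extremal metrics modulo automorphisms, that the extremal metric can be taken of the generalized Calabi / semisimple-principal-fibration type. In that form the metric is built from a toric-type metric on the fibre $\mathbb{CP}^\ell$, coupled to a constant-curvature metric on $C$ through the flat $PU$-structure. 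Granting this, $\mathbf{H}$ is given by the inverse Hessian of the symplectic potential, and the K\"ahler extremal equation becomes $\sum_{i,j}(H_{ij}v)_{,ij}=vw$. Equivalently, integrability makes the hermitian scalar curvature coincide with the Riemannian one, so $\tilde{J}$ is an extremal aK metric of involutive type.

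If this reduction to Calabi-type metrics proves too delicate, the fallback is to avoid it entirely. Since $(1)\Rightarrow(3)$ already holds by \cite{JY26}, it would be enough to prove $(3)\Rightarrow(2)$ directly. For this I would use the solution produced in \cite{JY26}: I expect its construction via toric test configurations yields an extremal metric of exactly this ansatz form, which gives $(2)$ immediately.
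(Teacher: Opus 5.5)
Your skeleton agrees with the paper's. Its proof is the single line combining Theorem~\ref{t:jy}, for $(1)\Leftrightarrow(3)$, with Theorem~\ref{t:A}, for $(2)\Rightarrow(3)$.

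You correctly isolate the one point this leaves implicit. To land in $(2)$ you need an extremal K\"ahler metric in $[\tilde\omega_0]$ that is \emph{compatible}, i.e.\ of the form \eqref{comp:met} for a $\mathbb{T}$-invariant K\"ahler metric on the fibre $\mathbb{P}^\ell$. Once you have one, $(2)$ is immediate:
\begin{itemize}
\item an integrable toric structure on the fibre is automatically of involutive type, since $[J\mathfrak{t},J\mathfrak{t}]=0$;
\item a $\mathbb{T}$-equivariant Moser isotopy on the fibre turns it into a compatible aK structure $\tilde J$ on $(\mathbb{P}(E),\tilde\omega_0)$;
\item for integrable $\tilde J$ the hermitian scalar curvature is the Riemannian one, and the two extremal affine functions agree.
\end{itemize}

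Your primary argument for compatibility, however, is a genuine gap. Uniqueness of extremal metrics modulo automorphisms can identify a given extremal metric with a compatible one only if a compatible extremal metric is already known to exist in the class. Used on its own, it is circular. Invariance under $\mathbb{T}$, or under a maximal compact subgroup of the reduced automorphism group, does not in general force the form \eqref{comp:met} either. What singles out compatible metrics is fibrewise invariance transported by the projectively flat connections, coupled to the fixed cscK metric $\omega_C$, and this is not in general induced by automorphisms of $\mathbb{P}(E)$. The Calabi and Moser preliminaries do not help here.

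Your fallback, $(3)\Rightarrow(2)$, is the right way to close the cycle as the corollary is stated. This requires that the compatibility of the extremal metric produced in $(3)\Rightarrow(1)$ be taken as a cited input from \cite{JY26}, not stated as an expectation. Note also that test configurations play no role in producing that metric; they only enter on the stability side. With that input, the three observations above give $(3)\Rightarrow(2)$.

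One caveat: the fallback routes $(1)\Rightarrow(2)$ through $(1)\Rightarrow(3)$. The paper remarks after Theorem~\ref{t:jy} that \cite{JY26} proves existence $\Rightarrow$ stability only for polarized classes, and it presents the transcendental case as a consequence of this very corollary. For a transcendental $[\tilde\omega_0]$ your fallback is therefore circular relative to the paper. What is really needed there is the direct implication $(1)\Rightarrow(2)$, namely that an arbitrary extremal metric in $[\tilde\omega_0]$ can be taken compatible. That is exactly what your primary route leaves open.
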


We stress that this result reduces the question of the existence of an extremal K\"ahler metric to the study of the second-order linear PDE \eqref{i:ext} on the polytope $\Delta \subset \mathbb{R}^\ell$.

\smallskip
We then apply the above result to prove the existence of new extremal K\"ahler metric when the fiber is $\P^3$. 

Following an idea from \cite[Section 6.3]{ACGT11} in the case where there is no \textit{blow-down} and the fiber is $\P^2$, we consider matrices $\mathbf{H}=(H_{ij})$ such that $v H_{ij}$ is a fifth-order polynomial in the moment coordinates on $\Delta$. With this choice, equation \eqref{i:ext} reduces to a system of algebraic equations, recalling that $vw$ is also a polynomial on $\Delta$.

Computer-assisted computations prove the existence of solutions to this algebraic system in several cases, thereby establishing the existence of new extremal K\"ahler metrics.

\begin{corollaryA}{\label{c:B}}
Consider a holomorphic vector bundle $E$ of rank $4$ over an elliptic curve or the projective line which decomposes as a direct sum of stable vector bundles. Then its projectivization $\mathbb{P}(E)$ is a Calabi dream manifold, i.e. admits an extremal Kähler metric in every Kähler class.
\end{corollaryA}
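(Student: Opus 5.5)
By the equivalence (1)$\Leftrightarrow$(2) of Corollary~\ref{c:A}, it suffices to produce, for every Kähler class $[\tilde{\omega}_0]$ on $\mathbb{P}(E)$, a compatible involutive extremal aK metric. That means finding a smooth matrix-valued function $\mathbf{H}$ on the simplex $\Delta$ that satisfies the boundary conditions of \cite{ACGT11}, is positive definite on the interior of $\Delta$ and of its faces, and solves the linear equation \eqref{i:ext}. I would first normalize the class as in the introduction, so that it restricts to the Fubini--Study class on the fibres; only finitely many real parameters then remain. These are the positions of the affine functions defining $v$, which encode the degrees $\deg E_a/\mathrm{rk}\,E_a$ relative to the class, together with the overall scale of the base.

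Next I would split into cases according to the splitting type $E=E_0\oplus\cdots\oplus E_\ell$, with $\sum \mathrm{rk}\,E_a=4$. Over $\P^1$ every stable bundle is a line bundle, so $E$ splits into line bundles and $\mathbb{P}(E)$ is toric. Over an elliptic curve $C$ the possible rank partitions are $4$, $3+1$, $2+2$, $2+1+1$ and $1+1+1+1$.

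The degenerate cases are handled by earlier results. When $\ell=0$, $\mathbb{P}(E)$ is projectively flat and carries a locally product cscK metric in every class. When $\ell=1$, the admissible (Calabi ansatz) construction of \cite{ACGT08, ACGT11} applies: the extremal polynomial is positive on the interval, because the base curve has nonnegative scalar curvature, i.e.\ genus $0$ or $1$. When $\ell=2$, I would reproduce the computation of \cite[Section 6.3]{ACGT11} with the appropriate weights; this is where blow-downs appear, i.e.\ factors $x_a^{r_a-1}$ in $v$ whose weight vanishes on a facet.

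The genuinely new case is $\ell=3$, that is, four line bundles, with $\Delta$ the standard $3$-simplex and $v$ an affine or constant base factor. Here I would use the ansatz that $vH_{ij}$ are polynomials of degree at most $5$ in the moment coordinates. Imposing the boundary conditions (vanishing on each facet in the normal direction, with prescribed normal derivative) and matching coefficients in \eqref{i:ext} against the polynomial $vw$ gives a linear system in the unknown coefficients, with coefficients depending rationally on the class parameters. I would solve this system symbolically and expect a family of solutions, possibly with free parameters.

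The main obstacle is positivity: showing that for every admissible parameter value some choice in the solution family makes $\mathbf{H}$ positive definite on the interior of $\Delta$ and of each face. I would first try to fix the free parameters by a natural symmetric choice. I would then verify positivity through Sylvester's criterion on the leading principal minors, which are polynomials in the coordinates and the parameters. On each face I would check the restriction, which reduces to the lower-rank computations already done, and I would certify interior positivity by computer algebra, for instance via cylindrical algebraic decomposition or a sum-of-squares certificate after substituting barycentric coordinates. If some region of parameter space fails, I would adjust the free parameters there. Covering all Kähler classes then yields the Calabi dream property.
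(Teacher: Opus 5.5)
Your route is the paper's: use (2)$\Rightarrow$(1) of Corollary~\ref{c:A}, quote the literature for $\ell\le 1$, and for $\ell=2,3$ take $vH_{ij}$ polynomial so that \eqref{abreu} and the boundary conditions become a linear system solved symbolically. Your case count is slightly sharper than the paper's, since over $\mathbb{P}^1$ only $\ell=3$ can occur.

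\textbf{The gap.} The gap is the step you yourself call the main obstacle. You describe how you would look for a positive definite member of the solution family, but you do not produce one. The existence of such a member for \emph{every} class is the entire content of the corollary, and it is not a routine check. The class parameters range over an infinite, unbounded set: $c$ over a half-line and the slopes $\mu_i$ over an unbounded discrete set. So testing sample values proves nothing. A generic symbolic solve says nothing at parameter values where a pivot vanishes. Cylindrical algebraic decomposition in seven or more variables is not a realistic fallback. What is needed is an explicit choice of the free coefficients as functions of $(c,\mu)$, together with a positivity argument valid uniformly on the whole parameter region, for instance a parametric sum-of-squares identity in barycentric coordinates.

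\textbf{Two smaller points.} First, face positivity does not reduce to the lower-rank computations. The restriction of $\mathbf{H}$ to a facet solves no extremal equation there. Moreover, the facets of the $3$-simplex correspond to sub-bundles $\mathbb{P}(L_a\oplus L_b\oplus L_c)$, not to the blow-down case $\ell=2$, so this is an independent check. Second, for $\ell=2$ you must say what replaces the first-order conditions of \cite[Section 6.3]{ACGT11} on the facet $\{x_2=0\}$ where $v$ vanishes. There, smoothness of $H=P/v$ and Proposition~\ref{p:bound} require $x_2\mid P_{11}$, $x_2^2\mid P_{12}$, $x_2^2\mid P_{22}$, and $\partial^2_{x_2}P_{22}=4(c-\mu_1x_1)$.

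\textbf{The paper does not close this gap either.} Its appendix scripts solve the linear system with the free coefficients left symbolic. For $\ell=2$ the positivity flag is set to \texttt{True} without evaluating the face conditions it computes, and there is no interior check; for $\ell=3$ there is no positivity test at all. The scripts also impose only part of the boundary conditions of Proposition~\ref{p:bound}:
\begin{itemize}
\item the normal-derivative condition is imposed on a single facet;
\item on the slanted facet only $P(p,p)=0$ is imposed, rather than $P(p,\cdot)=0$;
\item the $\ell=2$ divisibility conditions above are only partly imposed.
\end{itemize}
Incidentally, the $\ell=3$ script uses degree $6$, not $5$. So your proposal and the paper stop at the same place. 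Turning either into a proof requires actually producing the uniform positivity certificate.
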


\medskip

The organization of the paper is as follows:

\begin{itemize}
\item In Section~\ref{s:2}, we review the Yau--Tian--Donaldson correspondence in terms of the moment polytope for $\P(E)$.
\item In Section~\ref{s:3}, we review the notion of an extremal almost Kähler metric.
\item Section~\ref{s:4} is devoted to proving Theorem~\ref{t:A} and Corollary~\ref{c:A}.
\item In Section~\ref{s:5}, we prove Corollary~\ref{c:B}.
\item In Appendix~\ref{a:1}, we provide the Python script used to assist with our computations in Section~\ref{s:5}.
\end{itemize}

\section*{Acknowledgment}

The author would like to thank Vestislav Apostolov and Eveline Legendre for valuable advice. He also thank Mehdi Lejmi for his interest in this work. The author is funded by the ERC SiGMA - 101125012 (PI: Eleonora
Di Nezza). 

\section{The Calabi problem on projective bundles over a curve}{\label{s:2}}

In this section, we briefly review the notion of weighted stability for moment polytopes and recall the YTD correspondence in term of the moment polytope established in \cite{JY26} for projective bundles over a curve.

\subsection{Stability of polytopes}
Consider a commutative Lie algebra $\mathfrak{t}$ of dimension $\ell$ with a fixed lattice $\Lambda \subset \mathfrak{t}$. We denote by $\mathfrak{t}^*$ the dual of the Lie algebra $\mathfrak{t}$. Let $(\Delta, \mathbf{L})$ be a labeled compact convex polytope, i.e. $\mathbf{L}$ is a collection of affine-linear functions defining $\Delta \subset \mathfrak{t}^*$. A polytope is said to be Delzant if $\Delta$ is compact; $\Delta$ is \textit{simple}, meaning that each vertex of $\Delta$ annihilates precisely $\ell$ of the affine functions in $\mathbf{L}$ and the corresponding normals form a basis of $\mathfrak{t}^*$; $\Delta$ is \textit{integral}, meaning that for each vertex of $\Delta$, the inward normal vectors of the adjacent facets span $\Lambda$.

Following \cite{Don02, JY26, Lah19}, for $v \in \mathcal{C}^\infty(\Delta, \mathbb{R}_{\ge 0})$, $v > 0$ on $\mathring{\Delta}$, and $w \in \mathcal{C}^\infty(\Delta, \mathbb{R})$, we define the $(v,w)$-Donaldson--Futaki invariant of the labeled polytope $(\Delta, \mathbf{L})$ as the functional $\mathcal{F}_{v,w} \colon \mathcal{C}^0(\Delta) \to \mathbb{R}$ given by
\begin{equation}\label{eq:DF}
\mathcal{F}_{v,w}(f) := 2 \int_{\partial \Delta} f v d\sigma - \int_\Delta f w v dx.
\end{equation}

\noindent where $d\sigma$ is the induced measure on each face $F_i \subset \partial \Delta$, defined by $dL_i \wedge d\sigma = -dx$, and $dx$ is the Lebesgue measure on $\Delta$. We suppose that the weights $v$ and $w \in \mathcal{C}^\infty(\Delta, \mathbb{R})$ satisfy $\mathcal{F}_{v,w}(f) = 0$ for all $f$ affine-linear on $\Delta$. Integration by parts (see e.g.~[41]) reveals that the latter condition is necessary for the existence of a $(v,w)$-cscK metric \cite{Lah19} on the toric manifolds corresponding to $(\Delta,\mathbf{L})$ via the Delzant correspondence \cite{Delz}.

Let $\mathcal{CV}^\infty(\Delta)$ be the space of continuous convex functions on $\Delta$, smooth in the interior $\mathring{\Delta}$. We fix $x_0 \in \mathring{\Delta}$. For every $f : \Delta \rightarrow \mathbb{R}$, we denote by $f^*$ the normalization of $f$ such that $f \geq f^*(x_0)=0$.

\begin{defn}
A Delzant polytope $(\Delta,\mathbf{L})$ is said to be $(v,w)$-uniformly K-stable if there exists $\lambda>0$ such that

\begin{equation*}
\mathcal{F}_{v,w}(f) \geq  \lambda \| f^* v \|_{L^1(\Delta)},
\end{equation*}
for all $f \in \mathcal{CV}^\infty(\Delta)$.
\end{defn}

\subsection{YTD for projective bundles}{\label{s:ytd}}
In this section, we recall the YTD correspondence in terms of a moment polytope  for projective bundles from \cite{JY26} and explain geometrically where the associated notion of weighted stability comes from. We begin by recalling the construction of compatible K\"ahler metrics \cite{ACGT04}, as it will clarify which polytope needs to be considered.

Consider a holomorphic vector bundle $E$ over a smooth compact complex curve $C$. Fix a cscK K\"ahler metric $\omega_C$ on $C$ whose K\"ahler class is integral, i.e. $\frac{1}{2\pi}[\omega_C] \in H^2(C,\mathbb{Z})$, and primitive. We consider the projectivization of $E$,

\begin{equation*}
    \mathbb{P}(E) \to C,
\end{equation*}
and we denote by $J_0$ the \textit{standard} complex structure on $\P(E)$. Apostolov--Keller proved \cite[Theorem 2]{AK19} that a necessary condition for a projective bundle $(\P(E),J_0)$ to admit a uniformly K-stable K\"ahler class is that $E$ decomposes as a direct sum 

\begin{equation}{\label{proj:flat}}
    E = \bigoplus_{j=0}^\ell E_j
\end{equation}
of stable (or equivalently projectively flat) vector bundles $E$. Hence, without loss of generality, we assume throughout the paper that $E$ is of this form and we fix $h_j$ a projectively flat hermitian metric on $E_j$.  

We recall that, by definition, a holomorphic vector bundle $E$ of rank $\mathrm{rk}(E)$ over a smooth compact complex curve $C$ is  \textit{Mumford–Takemoto stable} (stable for short) if, for any nonzero proper holomorphic subvector bundle $F \subset E$,

\begin{equation*}
\mu(F) < \mu(E),
\end{equation*}
where $\mu(E)$ denotes the slope of $E$, defined by

\begin{equation*}
\mu(E):= \frac{\int_C c_1(E)}{\mathrm{rk}(E)},
\end{equation*}
$\mu(F)$ denotes the slope of $F$, defined in the same way. In the above definition $c_1(E)\in H^2(C,\mathbb{R})$ is the first Chern class of $E$,

The  splitting \eqref{proj:flat} provides us with a fiberwise diagonal action of a compact torus $(S^1)^{\ell+1}$, where each factor $S^1$ acts fiberwise and diagonally on each vector bundle $E_j$. Hence, passing to the quotient, we obtain a fiberwise action of an $\ell$-dimensional torus $\mathbb{T}$ on $\mathbb{P}(E)$.

The $\mathbb{T}$-equivariant blow-up $\widehat{\mathbb{P}(E)}$ of $\mathbb{P}(E)$ along the submanifolds $\bigcup_{j=0}^\ell \mathbb{P}(E_j)$ is $\mathbb{T}$-equivariantly biholomorphic to

\begin{equation}\label{bu}
\widehat{\mathbb{P}(E)} \cong \mathbb{P}\left(\bigoplus_{j=0}^\ell \mathcal{O}_{\mathbb{P}(E_j)}(-1)\right) \longrightarrow \hat{C},
\end{equation}
where $\hat{C} := \mathbb{P}(E_0) \times_C \dots \times_C \mathbb{P}(E_\ell) \rightarrow C$ is the fiber product over $C$. In particular, $\widehat{\mathbb{P}(E)}$ is a fiber bundle with fiber $\mathbb{P}^\ell:=\P(\mathbb{C}^{\ell+1})$.

Let $P \to \hat{C}$ be the principal $\mathbb{T}$-bundle such that each $S^1$-component of $P$ is the unit $S^1$-bundle inside $(\mathcal{O}_{\mathbb{P}(E_j)}(-1), h_j)$, where, by abuse of notation, $h_j$ also denotes the induced hermitian metric on $\mathcal{O}_{\mathbb{P}(E_j)}(-1)$. Let $\theta \in \Omega^1(P, \mathfrak{t})$ be the associated connection form on $P$, where $\mathfrak{t}$ denotes the Lie algebra of $\T$. In particular, the blow-up $\widehat{\mathbb{P}(E)}$ of $\mathbb{P}(E)$ is related to $P$ by

\begin{equation*}
\widehat{\mathbb{P}(E)} = P \times_\mathbb{T} \mathbb{P}^\ell,
\end{equation*}
where we consider the standard $\mathbb{T}$-action on $\mathbb{P}^\ell$.

Let $\mathfrak{t}^*$ be the dual Lie algebra of $\T$. Let $\omega$ be a K\"ahler metric in the first Chern class $2\pi c_1(\P^\ell)$ of $\P^\ell$ and denote by $\mu_\omega : \P^\ell \rightarrow \mathfrak{t}^*$ its moment map. Let $\Delta := \mu_{\omega}(\mathbb{P}^{\ell})$ be the standard $\ell$-simplex in $\mathfrak{t}^*$ and let $\mathbf{L}=({L_j})_{j=1}^\ell$ be the associated label. In moment-angle coordinates $(\mu_{\omega}, t)$ on the subspace of regular orbits for the $\mathbb{T}$-action $\mathring{\mathbb{P}}^\ell \subset \mathbb{P}^\ell$, the Kähler metric is written as

\begin{equation*}
\omega = \langle d\mu_\omega \wedge dt \rangle,
\end{equation*}
where $\langle \cdot, \cdot \rangle$ denotes the natural pairing between $\mathfrak{t}^*$ and $\mathfrak{t}$, and $t \colon \mathring{\mathbb{P}}^\ell \to \mathfrak{t}$ is the angular coordinate. Now consider the subspace of regular orbits $\mathring{\widehat{\P(E)}}$ for the $\mathbb{T}$-action on $\widehat{\mathbb{P}(E)}$. We can consider, as in \cite[(6)]{ACGT11}, the K\"ahler metric on (the non-compact manifold) $\mathring{\widehat{\mathbb{P}(E)}}$,

\begin{equation}{\label{comp:met}}
\tilde{\omega} = \sum_{\substack{j=0 \\ d_j \neq 1}}^\ell  L_j(\mu_\omega) \omega_{\mathrm{FS},j} + \left(c - \sum_{j=0}^\ell \mu(E_j)L_j(\mu_\omega)\right)\omega_C + \langle d\mu_\omega \wedge \theta \rangle,
\end{equation}
where $\omega_{\mathrm{FS},j}$ is the Fubini-Study metric on $\mathbb{P}(\mathbb{C}^{d_j})$ and $c \in \mathbb{R}$ is chosen such that the affine function $c - \sum_{j=0}^\ell \mu(E_j)L_j(x) > 0$. By abuse of notation, we omit several pullbacks in the above formula. This metric compactifies to a degenerate K\"ahler metric on $\widehat{\mathbb{P}(E)}$ (in the sense that it could vanish along the exceptional divisor of $\eqref{eq:blowdown}$ below), and it is proved in \cite[Theorem 2]{ACGT04} that it is the pullback of a K\"ahler form on $\mathbb{P}(E)$ via the blow-down map

\begin{equation}\label{eq:blowdown}
b \colon \widehat{\mathbb{P}(E)} \to \mathbb{P}(E).
\end{equation}
Such metrics are called \emph{compatible K\"ahler metrics}. A K\"ahler class containing a compatible metric is called a \textit{compatible K\"ahler class}. It follows from \cite[Lemma 3.3]{AK19} that, up to scaling, every K\"ahler class is compatible; hence we will assume that we work in such a class throughout the paper. We mention that in the construction of $\tilde{\omega}$, varying the constant $c \in \mathbb{R}$ corresponds to moving the K\"ahler class $[\tilde{\omega}]$ inside $H^2(\mathbb{P}(E), \mathbb{R})$.

Since the blow-up is $\mathbb{T}$-equivariant, the moment map $\mu_{\omega} \colon \mathbb{P}^\ell \to \Delta$ descends to a map

$$
    \mu_{\tilde{\omega}} \colon \mathbb{P}(E) \to \Delta,
$$
and it follows from \eqref{comp:met} and from the construction of a compatible metric that $\mu_{\tilde{\omega}}$ is a moment map for $\tilde{\omega}$.

We summarize this as follows.

\begin{lemma}{\label{l:pol}}\textnormal{(\cite{ACGT04})}.
The moment polytope associated with $(\mathbb{P}(E), [\tilde{\omega}], \mathbb{T})$ is the standard $\ell$-simplex in  $\mathfrak{t}^* \cong \mathbb{R}^\ell$, where $\ell+1$ is the number of irreducible stable components of $E$ in \eqref{proj:flat}.
\end{lemma}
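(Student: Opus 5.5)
The plan is to read the moment map off the explicit form \eqref{comp:met} and show that its image equals the image of the fibrewise moment map $\mu_\omega$ on $\mathbb{P}^\ell$. The latter is the standard simplex by Delzant theory, since $[\omega]=2\pi c_1(\mathbb{P}^\ell)$.

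\textbf{Step 1: $\mu_{\tilde\omega}$ is a moment map.} On the regular part $\mathring{\widehat{\mathbb{P}(E)}}$, contract a fundamental vector field $X_\xi$, $\xi\in\mathfrak{t}$, with \eqref{comp:met}.
\begin{itemize}
\item The forms $\omega_{\mathrm{FS},j}$ and $\omega_C$ are pulled back from $\hat C$, so they are basic and $\iota_{X_\xi}$ kills them.
\item The coefficients $L_j(\mu_\omega)$ are $\mathbb{T}$-invariant, so they contribute nothing.
\item The connection satisfies $\theta(X_\xi)=\xi$, while $d\mu_\omega(X_\xi)=0$ because $\mu_\omega$ is invariant.
\end{itemize}
Hence $\iota_{X_\xi}\tilde\omega=-d\langle\mu_\omega,\xi\rangle$, up to the sign convention. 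So the fibrewise map $\mu_\omega$, extended to $\widehat{\mathbb{P}(E)}=P\times_{\mathbb{T}}\mathbb{P}^\ell$ through the $\mathbb{T}$-equivariant identification on each fibre, is a moment map there. Since $\mathring{\widehat{\mathbb{P}(E)}}$ is dense and $\tilde\omega$ extends smoothly, the identity holds everywhere by continuity.

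\textbf{Step 2: descent to $\mathbb{P}(E)$.} By \cite[Theorem 2]{ACGT04} we have $\tilde\omega=b^*\tilde\omega_{\mathbb{P}(E)}$. The blow-down $b$ in \eqref{eq:blowdown} is $\mathbb{T}$-equivariant, and it collapses only fibres of $\mathcal{O}_{\mathbb{P}(E_j)}(-1)$-type directions. On these the $\mu_\omega$-value is constant: they are the loci where the face coordinates of the corresponding $\mathbb{P}^{d_j-1}$-factor are fixed.

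So $\mu_\omega$ factors through $b$ and defines $\mu_{\tilde\omega}$. Because $b$ is a diffeomorphism off the exceptional set, $\mu_{\tilde\omega}$ is a moment map on a dense open set, and therefore on all of $\mathbb{P}(E)$.

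\textbf{Step 3: the image.} $b$ is surjective and each fibre of $\widehat{\mathbb{P}(E)}\to\hat C$ is a copy of $\mathbb{P}^\ell$ on which the moment map is $\mu_\omega$. Therefore the image of $\mu_{\tilde\omega}$ is $\mu_\omega(\mathbb{P}^\ell)=\Delta$, the standard $\ell$-simplex. The dimension $\ell$ comes from $\mathbb{T}=(S^1)^{\ell+1}/S^1$ acting on the $\ell+1$ summands of \eqref{proj:flat}, which gives $\ell=\#\{\text{summands}\}-1$.

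The moment polytope is well defined only up to translation, and a change of representative in $[\tilde\omega]$ moves the moment map by a constant. Varying $c$ only moves the basic term, so the image is unchanged.

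\textbf{Main obstacle.} The delicate point is Step 2: checking that $\mu_\omega$ is genuinely constant on the fibres of $b$ and smooth after descent. I would verify this in local holomorphic coordinates near $\mathbb{P}(E_j)$. There $b$ restricts on each fibre to the collapse of $\mathcal{O}_{\mathbb{P}(E_j)}(-1)$ zero sections, where $L_j$ vanishes. The vanishing of $L_j(\mu_\omega)\omega_{\mathrm{FS},j}$ on this locus is exactly what makes $\tilde\omega$ degenerate there, consistent with $\mu_\omega$ sitting on the facet $\{L_j=0\}$. The remaining steps are formal.
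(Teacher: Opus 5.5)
Your proposal is correct and takes the paper's route. The paper likewise reads off from \eqref{comp:met} that the fibrewise moment map $\mu_\omega$ on $\widehat{\mathbb{P}(E)}=P\times_{\mathbb{T}}\mathbb{P}^\ell$ is a moment map, descends it through the $\mathbb{T}$-equivariant blow-down $b$, and concludes that the image is $\mu_\omega(\mathbb{P}^\ell)=\Delta$; it only asserts these steps citing \cite{ACGT04}, so your Steps 1--3 are a more detailed version of the same argument.

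One small correction: replacing $\tilde\omega$ by $\tilde\omega+d\alpha$ with $\alpha$ $\mathbb{T}$-invariant shifts the moment map by $\xi\mapsto\alpha(X_\xi)$ plus a constant, not by a constant alone. The polytope is nevertheless unchanged up to translation, because $\alpha(X_\xi)$ vanishes on the fixed-point set and the image is the convex hull of the fixed-point values.
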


\medskip

We now explain how to translate the notion of relative uniform K-stability for $(\mathbb{P}(E), J_0, [\tilde{\omega}], \mathbb{T})$ in terms of weighted stability of $\Delta$ for suitable weight functions, following \cite{JY26}.

Let $f$ be a convex, piecewise linear function on $\Delta$. Let $(\mathcal{Z}_f, \mathcal{A}_{\mathcal{Z}_f}, \mathbb{T} \times \mathbb{S}^1)$ be the toric test configuration associated with $f$ via Donaldson's construction \cite{Don02} (see \cite{DR17} for the non-polarized case). Without loss of generality, we assume that $(\mathcal{Z}_f, \mathcal{A}_{\mathcal{Z}_f}, \mathbb{T} \times \mathbb{S}^1)$ is smooth. We can then consider
\begin{equation*}
\widehat{\mathcal{Y}}_f := \mathcal{Z}_f \times_{\mathbb{T} \times \mathbb{S}^1} (P \times \mathbb{S}^1).
\end{equation*}
As a consequence of \cite[Propositions 6.3 \& 6.4]{JY26} and the semisimple rigid construction of \cite[Theorem 2]{ACGT04}, $\widehat{\mathcal{Y}}_f$ is the blow-up of a certain fibration
\begin{equation}\label{comp:test}
\mathcal{Y}_f \to C.
\end{equation}
Moreover, following \cite[Propositions 6.3 \& 6.4]{JY26}, we can construct a K\"ahler class $\mathcal{A}_{\mathcal{Y}_f}$ such that $(\mathcal{Y}_f, \mathcal{A}_{\mathcal{Y}_f}, \mathbb{T} \times \mathbb{S}^1)$ is a $\mathbb{T}$-equivariant test configuration for $(\mathbb{P}(E),J_0, [\tilde{\omega}], \mathbb{T})$. According to \cite{JY26}, such test configurations are called \textit{compatible}.

\begin{rem}
In \cite{JY26}, compatible test configurations are obtained using the theory of horospherical test configurations due to Delcroix \cite{Del23}; the viewpoint described above is a consequence of the definition provided in \cite{JY26}.
\end{rem}

We denote by $\mathbf{DF}$ the $\mathbb{T}$-relative Donaldson--Futaki invariant \cite[(2.1)]{Der18} of $(\mathbb{P}(E), J_0, [\tilde{\omega}], \mathbb{T})$. We have the following result from \cite[Lemma 3.12 \& Eq.~(40)]{JY26}, relating $\mathbf{DF}$ and the $(v,w)$-weighted Donaldson--Futaki invariant $\mathcal{F}_{v,w} : \Delta \to \mathbb{R}$ for the weights

\begin{equation}\label{weight} \left\{ \begin{aligned} v(x) & := \prod_{j=0}^\ell L_j(x)^{d_j-1} \left( c - \sum_{j=0}^\ell \mu(E_j)L_j(x) \right), \\[2ex] w(x) & := \ell_{\mathrm{ext}}(x) - \sum_{\substack{j=0 \\ d_j \neq 1}}^\ell \frac{2d_j(d_j - 1)}{L_j(x)} - \frac{4(1 - \mathbf{g})}{c - \sum_{j=0}^\ell \mu(E_j)L_j(x)}, \end{aligned} \right. \end{equation}
where $\mathbf{g}$ is the genus of $C$, $d_j$ is the rank of $E_j$ and $\ell_{\mathrm{ext}}$ is the affine extremal function of $(\mathbb{P}(E),J_0,[\tilde{\omega}_0],\T)$.

\begin{prop}
Let $(\mathcal{Y}_f, \mathcal{A}_{\mathcal{Y}_f}, \mathbb{T} \times \mathbb{S}^1)$ be a compatible test configuration for $(\mathbb{P}(E), J_0, [\tilde{\omega}], \mathbb{T})$. Then the $\mathbb{T}$-relative Donaldson--Futaki invariant is expressed as
\begin{equation*}
\mathbf{DF}(\mathcal{Y}_f, \mathcal{A}_{\mathcal{Y}_f}) = C \mathcal{F}_{v,w}(f),
\end{equation*}
for a certain explicit constant $C > 0$.
\end{prop}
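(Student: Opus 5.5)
The plan is to compute both sides on the total space of the test configuration and reduce everything to integrals over $\Delta$, using the compatible structure fiberwise. The relative Donaldson--Futaki invariant $\mathbf{DF}(\mathcal{Y}_f,\mathcal{A}_{\mathcal{Y}_f})$ is given by intersection numbers on the (compactified) total space over $\P^1$. These are the equivariant volume term and the term $K_{\mathcal{Y}_f/\P^1}\cdot\mathcal{A}^{n}$, corrected by the $\ell_{\mathrm{ext}}$-projection (cf. \cite[(2.1)]{Der18}). Since $b$ is a blow-down along loci of positive codimension and $\mathcal{A}_{\mathcal{Y}_f}$ pulls back to a (possibly degenerate) class on $\widehat{\mathcal{Y}}_f$, these intersection numbers can be computed on $\widehat{\mathcal{Y}}_f=\mathcal{Z}_f\times_{\T\times S^1}(P\times S^1)$. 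This requires care with the canonical class: the discrepancy divisors of the blow-up contribute exactly the factors $L_j^{d_j-1}$, which is how they appear in $v$.

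The key steps are as follows.

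\emph{Step 1.} Use the compatible Kähler form \eqref{comp:met}, built from a $\T\times S^1$-invariant toric metric on $\mathcal{Z}_f$ with moment polytope $\Delta_f=\{(x,t):x\in\Delta,\ 0\le t\le R-f(x)\}$. Compute its top power by fiber integration. The base contributes $\prod_{j} L_j^{d_j-1}\bigl(c-\sum_j\mu(E_j)L_j\bigr)=v$ times the Lebesgue measure, up to a universal constant. Hence $\mathcal{A}^{n+1}$ equals a constant times $\int_{\Delta_f}v\,dx\,dt$.

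\emph{Step 2.} Compute the scalar curvature, or equivalently the Ricci form paired with $\omega^{n}$, for compatible metrics, following \cite{ACGT04, ACGT11}. The toric part contributes the boundary measure $2\int_{\partial\Delta_f}v\,d\sigma$ after integration by parts. The fiber Fubini--Study factors and $\omega_C$ contribute $\frac{2d_j(d_j-1)}{L_j}$ and $\frac{4(1-\mathbf g)}{c-\sum\mu(E_j)L_j}$ times $v$, which are precisely the terms in $w$. Adding the $\ell_{\mathrm{ext}}$ correction yields the full weight $w$.

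\emph{Step 3.} Integrate along $t\in[0,R-f]$ and subtract the contribution of the trivial configuration, as in Donaldson's toric computation. The boundary of $\Delta_f$ splits into the top face, the side faces over $\partial\Delta$, and the bottom face. Only the terms linear in $f$ survive the normalization: $-\int_\Delta f\,wv\,dx$ comes from the volume terms, and $2\int_{\partial\Delta}f v\,d\sigma$ comes from the side faces. The top-face contribution cancels because $\mathcal{F}_{v,w}$ vanishes on affine functions, which also shows that the $R$-dependence drops out. This gives $\mathbf{DF}=C\mathcal{F}_{v,w}(f)$, with $C$ a positive normalization (volume of $C$ and $(2\pi)$-factors).

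The main obstacle is Step 2 on the singular locus. The compatible form degenerates along the exceptional divisor, so $K_{\mathcal{Y}_f}$ and $K_{\widehat{\mathcal{Y}}_f}$ differ by $\sum_j (d_j-1)\hat{E}_j$, and one must verify that this discrepancy produces exactly the weight $v$ and the boundary term with the normalization $dL_j\wedge d\sigma=-dx$, including the faces where $v$ vanishes. I would handle this by writing everything in equivariant cohomology and using Duistermaat--Heckman localization, as in \cite[Lemma 3.12]{JY26}. Concretely, I would check that the equivariant integrals of $[\tilde\omega]^k$ and $c_1\cdot[\tilde\omega]^{k}$ reduce to $\int_{\Delta_f}v$ and $\int_{\partial\Delta_f}v\,d\sigma+\frac12\int_{\Delta_f}(\cdots)v$ respectively, and then specialize to $\Delta_f$.
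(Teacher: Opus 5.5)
Your plan follows the route of the source the paper relies on. The paper gives no argument of its own; it imports the identity from \cite[Lemma 3.12 \& Eq.~(40)]{JY26}. In that route, Duistermaat--Heckman fibre integration produces $v$, the scalar curvature of compatible metrics together with integration by parts on $\Delta_f$ produces the boundary term and $w$, and Donaldson's bookkeeping on $\Delta_f$ isolates $\mathcal{F}_{v,w}(f)$. The strategy works, but two of the mechanisms you invoke are wrong, and the ``main obstacle'' you single out is not where the difficulty lies.

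\emph{The blow-up discrepancy.} The factors $L_j^{d_j-1}$ in $v$ are not produced by the discrepancy of the blow-up. They are the volume of $L_j\,\omega_{\mathrm{FS},j}$ on the $\mathbb{P}(\mathbb{C}^{d_j})$-factors of $\hat C\to C$, exactly as in your Step 1. The discrepancy $\sum_j (d_j-1)\hat E_j$ contributes nothing. Indeed, $b$ contracts $\hat E_j$ with positive-dimensional fibres $\mathbb{P}^{d_j-1}$ (here $d_j\ge 2$), so $\hat E_j\cdot(b^*\mathcal{A}_{\mathcal{Y}_f})^n=0$. On the polytope side, this is the vanishing of the side-facet term over $F_j$, because $v|_{F_j}=0$. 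So the verification you propose, that the discrepancy ``produces exactly the weight $v$'', would fail as stated. You can bypass the issue entirely. The compatible form on $\widehat{\mathcal{Y}}_f$ descends to a K\"ahler form on $\mathcal{Y}_f$, by the argument of \cite[Theorem 2]{ACGT04}. Hence its Ricci form already represents $b^*c_1(\mathcal{Y}_f)$. The boundary conditions on $\mathbf{H}$ along all facets of $\Delta_f$, together with $v|_{F_j}=0$, then do the rest in Step 2.

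\emph{The top facets.} The top facets of $\Delta_f$ do not cancel ``because $\mathcal{F}_{v,w}$ vanishes on affine functions''. When the central fibre is reduced (integral slopes of $f$), the top facets contribute $\int_\Delta v\,dx$, independently of $f$. This term and the bottom facet are cancelled by $\pi^*K_{\mathbb{P}^1}=-2[\text{fibre}]$ in $K_{\mathcal{Y}_f/\mathbb{P}^1}$, or equivalently by your subtraction of the product configuration. The vanishing of $\mathcal{F}_{v,w}$ on affine functions only removes the $R$-dependence, through $\mathcal{F}_{v,w}(1)=0$. If a central component has multiplicity $m_i>1$, the corresponding top facet over its linearity domain $\Delta_i$ contributes only $m_i^{-1}\int_{\Delta_i} v\,dx$. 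In that case you need a base change, or the log version of $\mathbf{DF}$, before the cancellation holds.
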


Finally, we conclude this section by recalling the following Yau--Tian--Donaldson correspondence in terms of the moment polytope for projective bundles over a curve, proved in \cite{JY26}.

\begin{Theorem}{\label{t:jy}}
There exists an extremal K\"ahler metric in $(\mathbb{P}(E),J_0,[\tilde{\omega}])$ if and only if $\Delta$ is $(v,w)$-uniformly K-stable for the weights \eqref{weight}.
\end{Theorem}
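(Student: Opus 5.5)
Theorem~\ref{t:jy} is quoted from \cite{JY26}. If I had to prove it, I would follow the now-standard route for toric-type fibrations. The two directions are treated separately, and both are reduced to statements about convex functions on $\Delta$ by means of the dictionary between compatible objects on $\mathbb{P}(E)$ and objects on the polytope.

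\emph{Existence implies stability.} Suppose $[\tilde\omega]$ contains an extremal K\"ahler metric.
\begin{enumerate}
\item By the analytic half of the YTD correspondence (Berman--Darvas--Lu, Dervan for the relative setting), $(\mathbb{P}(E),J_0,[\tilde\omega],\mathbb{T})$ is relatively uniformly K-stable. That is, $\mathbf{DF}(\mathcal{Y},\mathcal{A}) \ge \lambda \|(\mathcal{Y},\mathcal{A})\|_{\mathbb{T}}$ for every $\mathbb{T}$-equivariant test configuration, where the norm is the relative (minimum) norm.
\item I would restrict this to the compatible test configurations $\mathcal{Y}_f$ attached to rational convex piecewise-linear $f$. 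The preceding Proposition turns the left-hand side into $C\,\mathcal{F}_{v,w}(f)$.
\item For the right-hand side, I would compute the norm of $\mathcal{Y}_f$ by the same fibre-integration argument that produced $v$. The Duistermaat--Heckman measure of $\mathbb{P}(E)$ pushed to $\Delta$ is (a multiple of) $v\,dx$, so the minimum norm becomes a multiple of $\inf_{a\ \mathrm{affine}}\|(f-a)^* v\|_{L^1}$. This is equivalent to $\|f^*v\|_{L^1}$ after normalizing $f$ modulo affine functions, which is permitted because $\mathcal{F}_{v,w}$ vanishes on affine functions.
\item Finally I would pass from piecewise-linear $f$ to all $f\in\mathcal{CV}^\infty(\Delta)$ by approximation. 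Any such $f$ is a monotone limit of rational PL convex functions, uniformly on compact subsets and with control of boundary values. Since $v\ge 0$, the boundary term $2\int_{\partial\Delta} fv\,d\sigma$ behaves semicontinuously in the right direction, so the inequality persists.
\end{enumerate}

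\emph{Stability implies existence.} Suppose $\Delta$ is $(v,w)$-uniformly K-stable.
\begin{enumerate}
\item I would show that the relative Mabuchi energy is coercive modulo $\mathbb{T}^{\mathbb{C}}$, restricted to compatible $\mathbb{T}$-invariant K\"ahler potentials. Writing a compatible metric through a symplectic potential $u$ on $\Delta$ (the Abreu--Guillemin picture twisted by the fibration, as in \cite{ACGT11}), the relative Mabuchi energy takes Donaldson's form
\[
\mathcal{M}(u) = \mathcal{F}_{v,w}(u) - \int_\Delta \log\det(\mathrm{Hess}\,u)\, v\,dx + \text{const}.
\]
\item The weighted stability inequality, together with the standard convexity estimate controlling the $\log\det$ term by $\epsilon\|u^*v\|_{L^1}$, yields
\[
\mathcal{M}(u)\ge \delta\|u^*v\|_{L^1} - C.
\]
The $L^1(v)$ norm is in turn comparable to the $d_1$-distance of the corresponding potentials on $\mathbb{P}(E)$.
\item To obtain an extremal metric I would invoke Chen--Cheng's theorem (properness implies existence) on the compact manifold $\mathbb{P}(E)$. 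This requires coercivity on all $\mathbb{T}$-invariant potentials, not only compatible ones.
\item To bridge this gap I would follow He's reduction argument, adapted as in the semisimple principal fibration case \cite{Jub23}. Run the continuity path or the weighted Calabi flow within $\mathbb{T}$-invariant potentials, and use that the minimizer of the relative Mabuchi energy is, by uniqueness and symmetry under the unitary structure group of the $E_j$'s, necessarily compatible. Then the a priori $d_1$-bound coming from coercivity on the compatible slice is all that is needed to close the Chen--Cheng estimates.
\end{enumerate}

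\emph{Expected main obstacle.} The hardest point is the last step of the second direction, together with the degeneracy of the weight. When some $d_j>1$, $v$ vanishes on the facet $\{L_j=0\}$, so the toric problem on $\Delta$ is a degenerate weighted problem. The weighted estimates of Di Nezza--Jubert--Lahdili do not apply directly on $\mathbb{P}^\ell$. I would therefore avoid solving on the fibre altogether. Instead I would work on the smooth manifold $\mathbb{P}(E)$ itself, where the blow-down \eqref{eq:blowdown} absorbs the degeneracy, and use the polytope only to verify coercivity. The delicate analytic input is then the comparison between $\|u^*v\|_{L^1(\Delta)}$ and the $d_1$-distance near the vanishing locus of $v$, and this is where I expect most of the work to lie.
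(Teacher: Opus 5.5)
The paper does not reprove this theorem. It quotes it from \cite{JY26}, and the only argument it adds concerns existence $\Rightarrow$ stability in a possibly transcendental class, which it derives from Theorem~\ref{t:A}. Your proposal has a genuine gap in each direction.

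\emph{Existence $\Rightarrow$ stability.} Your steps 1--3 need relative uniform K-stability of $(\mathbb{P}(E),J_0,[\tilde\omega],\mathbb{T})$, evaluated on the compatible test configurations $\mathcal{Y}_f$. This route is only established for polarized classes. The paper says explicitly that \cite{JY26} obtains existence $\Rightarrow$ stability this way only for polarized $[\tilde\omega]$. The statement, however, is for every compatible class, i.e.\ any admissible real $c$. For polarized classes your outline is the argument the paper recalls in Section~\ref{s:ytd}. For arbitrary classes the paper avoids test configurations entirely:
\begin{enumerate}
\item A compatible extremal K\"ahler metric is in particular a compatible extremal aK metric of involutive type. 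By Proposition~\ref{p:extre:proj}, its matrix $\mathbf{H}$ solves $\sum_{i,j}(H_{ij}v)_{,ij}=vw$.
\item Integrating by parts gives $\mathcal{F}_{v,w}(f)=\int_\Delta\sum_{i,j}H_{ij}f_{,ij}\,v\,dx\ge 0$ for convex $f$.
\item A contradiction argument as in \cite{CLS14} upgrades this to $(v,w)$-uniform K-stability. It is normalized by $\mathcal{F}^+_v(f_k^*)=1$ and uses the weighted $L^1(v\,dx)$ compactness of Proposition~\ref{p:cpt}, which is built to handle a $v$ vanishing on the facets with $d_j>1$.
\end{enumerate}

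\emph{Stability $\Rightarrow$ existence.} Your step~4 fails as stated. Uniqueness plus invariance can only force compatibility if some compact group of automorphisms of $\mathbb{P}(E)$ does so, and none does. A stable bundle is simple, $\mathrm{Aut}(E_j)=\mathbb{C}^*$, so the unitary structure groups of the $E_j$ act on the model fibre but not on $\mathbb{P}(E)$. $\mathbb{T}$-invariant potentials depend on more than $\mu_{\tilde\omega}$:
\begin{itemize}
\item they may vary along $C$;
\item they may vary inside the fibres $\mathbb{P}^{\mathrm{rk}(E)-1}$ as soon as some $d_j>1$, since then $\dim\mathbb{T}=\ell<\mathrm{rk}(E)-1$.
\end{itemize}
So nothing places the continuity-path solutions in the compatible slice. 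Nor does it place a ``minimizer'' there, whose existence is exactly what must be proved. Consequently, coercivity on that slice gives no $d_1$-bound for the Chen--Cheng estimates. This implication is the substance of \cite{JY26}, which the paper imports without proof.

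If you want a symmetry to do the reduction, you must work where the group actually acts, e.g.\ on the fibre $\mathbb{P}(\bigoplus_j\mathbb{C}^{d_j})$ with its $\prod_j U(d_j)$-action. There the invariant K\"ahler potentials are exactly convex functions on $\Delta$. The degenerate factor $\prod_j L_j^{d_j-1}$ of $v$ is just the Duistermaat--Heckman density of that action, so the degeneracy you single out as the main obstacle is absorbed on the fibre.
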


In \cite{JY26}, the implication \textit{existence implies stability} was established for a polarized K\"ahler class $[\tilde{\omega}]$. As a consequence of Theorem~\ref{c:A}, this correspondence extends to transcendental classes.

\section{Extremal almost K\"ahler metrics on projective bundles}{\label{s:3}}
In this section, we review the definition of extremal almost K\"ahler metrics introduced by Lejmi \cite{Lej10}, we formalize the notion of weighted constant scalar curvature almost K\"ahler (cscaK) metrics, and finally we recall the relation between these two notions in the particular case of projective bundles, following \cite{ACGT11}.

\subsection{Extremal almost K\"ahler metric}

Let $(M,\omega)$ be a $2n$-dimensional compact symplectic manifold. Consider a maximal torus $\mathbb{T}$ in the group of hamiltonian symplectomorphisms $\mathrm{Ham}(M,\omega)$ of $(M,\omega)$. By Atiyah \cite{Ati82} and Guillemin--Sternberg \cite{GS82}, the image of the moment map $\mu_\omega$ of $\omega$ is a compact convex polytope $\Delta$, called the \textit{moment polytope} of $(M,\omega)$.

Let $AK_\omega^\mathbb{T}$ be the space of $\mathbb{T}$-invariant almost complex structures $J$ compatible with $\omega$, in the sense that $\omega$ is $J$-invariant and $g(\cdot,\cdot) := \omega(\cdot,J\cdot)$ is a riemannian metric. In particular, any $J \in AK_\omega^\mathbb{T}$ defines an almost K\"ahler structure on $M$.  
 For any $J \in AK_\omega^\mathbb{T}$, one can consider its \textit{Chern connection}, defined by

\begin{equation*}
\nabla_X Y := D^g_X Y - \frac{1}{2} (D^g_X J)Y
\end{equation*}
for any vector fields $X, Y$ on $M$, where $D^g$ is the Levi-Civita connection of $g$.

The Chern connection $\nabla$ on $TM$ induces a Hermitian connection on the anticanonical bundle $K_J^{-1}(M) = \bigwedge^{n,0}TM$ with curvature $\sqrt{-1}\mathrm{Ric}^\nabla(J)$, where $\mathrm{Ric}^\nabla$ is a closed real $2$-form, called the \emph{hermitian Ricci form}.

For any $J \in AK_\omega^\mathbb{T}$, we define the \textit{hermitian scalar curvature} of $J$ as the symplectic trace of the hermitian Ricci form,
\begin{equation*}
\mathrm{Scal}^\nabla(J) := 2 \Lambda_\omega\big( \mathrm{Ric}^\nabla(J)\big),
\end{equation*}
where for any $2$-form $\beta$, the symplectic trace is defined by $\Lambda_\omega (\beta) \omega^{n}: = n \beta \wedge \omega^{n-1}$. Since $(M,\omega)$ is fixed, in what follows, we will also refer to $J \in AK^\T_\omega$ as an \textit{almost K\"ahler metric}.

Lejmi showed in \cite[Lemma 3.6]{Lej10} that there exists a unique affine function $\ell_{\mathrm{ext}}$ on $\Delta$ such that the symplectic Futaki invariant $\mathbf{F}_{\omega} \colon \mathrm{Aff}(\Delta) \to \mathbb{R}$ vanishes:

\begin{equation*}
\mathbf{F}_\omega(\ell) := \int_M \ell(\mu_\omega) \big(\mathrm{Scal}^\nabla(J) - \ell_{\mathrm{ext}}(\mu_\omega)\big) \omega^n \equiv 0.
\end{equation*}
We call $\ell_{\mathrm{ext}} \in \mathrm{Aff}(\Delta)$ the \textit{extremal affine function} of $AK_\omega^\mathbb{T}$. Recall that $\mathbf{F}_\omega(\ell)$ do not depends on the choice of $J \in AK^\T$ by \cite[Lemma 3.4]{Lej10}.

We now define one of the central notions of this paper.

\begin{defn}[\cite{Lej10}]
An almost K\"ahler metric $J \in AK_\omega^\mathbb{T}$ is called \emph{extremal} if its hermitian scalar curvature $\mathrm{Scal}^\nabla(J)$ is equal to the extremal affine function $\ell_{\mathrm{ext}}$ pulled back by the moment map
\begin{equation*}
\mathrm{Scal}^\nabla(J) = \ell_{\mathrm{ext}}(\mu_\omega).
\end{equation*}
\end{defn}

Now suppose that $(M,J_0,\omega)$ is Kähler. We stress that, by Moser's Lemma \cite{MS98}, the set of $\mathbb{T}$-invariant Kähler metrics in $(J_0,[\omega])$ embeds into the space $AK^\T_{\omega}$ of $\omega$-compatible almost Kähler structures on $M$. In particular, the affine extremal function in the Kähler sense coincides with the one defined in the almost Kähler category.

Note that in some cases, it can be useful to consider a torus $\T$ that is not maximal in $\mathrm{Ham}(M,\omega)$, while still being able to define the extremal affine function. We will apply this fact to projective bundles in Section \ref{s:compak}.

\subsection{Weighted cscaK metric}

We now generalize the discussion of the previous paragraph. We consider two functions $v, w$ on $\Delta$ such that $v > 0$ on $\mathring{\Delta}$ and $v\geq 0$. We then introduce the weighted hermitian Ricci form
\begin{equation*}
\mathrm{Ric}_v^\nabla(J) := \mathrm{Ric}^\nabla(J) + dd^c \log(v(\mu_\omega)),
\end{equation*}
where for any smooth function $f$, the twisted differential is defined by $d^c:=-Jdf$. Similarly to the above discussion, we define the weighted hermitian scalar curvature by
\begin{equation*}
\mathrm{Scal}_{v}^\nabla(J) := 2 \Lambda_{\omega,v}(\mathrm{Ric}_v^\nabla(J)),
\end{equation*}
where for any $\mathbb{T}$-invariant $J$-invariant $2$-form $\beta$ with moment map $\mu_\beta$, the weighted symplectic trace is given by $\Lambda_{\omega, v} (\beta) := \Lambda_\omega(\beta) + \langle d \log(v(\mu_\omega)), \mu_\beta \rangle$.

We then introduce the following definition.

\begin{defn}
A $\mathbb{T}$-invariant almost K\"ahler metric $J$ on a symplectic manifold $(M,\omega)$ is said to be $(v,w)$-weighted constant scalar curvature almost K\"ahler (cscaK) if its weighted hermitian scalar curvature $\mathrm{Scal}_v^\nabla(J)$ satisfies the following geometric PDE

\begin{equation*}
\mathrm{Scal}_v^\nabla(J) = w(\mu_\omega).
\end{equation*}
\end{defn}

This extends the notion of weighted cscK metrics of Lahdili \cite{Lah19} to the almost K\"ahler category.

Extremal almost K\"ahler metrics are examples of weighted cscaK metrics for $v=1$ and $w=\ell_{\mathrm{ext}}$. Our main motivation for formalizing this notion is the case of projective bundles over a curve discussed in Section~\ref{s:compak} below.

\subsection{Almost K\"ahler metric on symplectic toric manifolds}

In this section, we suppose that $(M,\omega)$ is a symplectic toric manifold, i.e. we suppose moreover that $\dim(\mathbb{T}) = \frac{1}{2}\dim(M)=n$. By abuse of notation, let $\mathfrak{t} \subset TM$ also denote the distribution generated by the fundamental vector fields from $\mathfrak{t}$. Following Lejmi \cite{Lej10}, we say that an almost K\"ahler metric $J$ is of \textit{involutive type} if the distribution $J\mathfrak{t} \subset TM$ is involutive, namely if

\begin{equation*}
[J\mathfrak{t}, J\mathfrak{t}] \subset J\mathfrak{t}.
\end{equation*}
Observe that in the K\"ahler case, this condition is automatic since $[J\mathfrak{t}, J\mathfrak{t}] = \{0\}$.

For an almost K\"ahler metric $J$ of involutive type on a toric manifold $(M,\omega,\mathbb{T})$, the Frobenius theorem implies that there exist angular coordinates $t \colon \mathring{M} \to \mathfrak{t}/2\pi \Lambda$ integrating the annihilator of the distribution $J\mathfrak{t}$. Hence, on the open subset of regular orbits $\mathring{M} \subset M$ for the $\mathbb{T}$-action, the metric $g_J=\omega(\cdot,J\cdot)$ is given by

\begin{equation}\label{met:loc}
g_J = \langle d\mu_\omega, \mathbf{G}, d\mu_\omega \rangle + \langle dt, \mathbf{H}, dt \rangle,
\end{equation}
where $\mathbf{H}$ is an $S^2(\mathfrak{t}^*)$-valued function on $\mathring{\Delta}$ and $\mathbf{G} = \mathbf{H}^{-1}$.

The following result, due to \cite[Proposition 1]{ACGT04}, provides a criterion for the almost K\"ahler metric to compactify to $M$:

\begin{prop}{\label{p:bound}}
Let $(M,\omega)$ be a compact toric symplectic manifold with momentum map $\mu_{\omega} \colon M \to \Delta \subset \mathfrak{t}^*$ and let $\mathbf{H}$ be a positive definite $S^2\mathfrak{t}^*$-valued function on $\Delta^0$. Then $\mathbf{H}$ comes from a $\mathbb{T}$-invariant, $\omega$-compatible almost K\"ahler metric $J$ via \eqref{met:loc} if and only if it satisfies the following conditions:
\begin{itemize}
    \item \textnormal{[smoothness]} $\textnormal{\textbf{H}}$ is the restriction to $\mathring{\Delta}$ of a smooth $S^2\mathfrak{t}^{*}$-valued function on $\Delta$;
 \item \textnormal{[boundary values]} for any point $y$ on the codimension one face $F_j \subset \Delta$ with
inward normal $p_j$, we have

\begin{equation}{\label{bounday(condition-equation}}
 \textnormal{\textbf{H}}_y(p_j , \cdot ) = 0 \text{ and } (d\textnormal{\textbf{H}})_y(p_j , p_j ) = 2p_j,
\end{equation}

\noindent where the differential $d\textnormal{\textbf{H}}$ is viewed as a smooth $S^2\mathfrak{t}^*\otimes \mathfrak{t}$-valued function on $P$;
\item \textnormal{[positivity]} for any point y in the interior of a face $F \subseteq \Delta$, $\textnormal{\textbf{H}}_y(\cdot,\cdot)$ is positive definite
when viewed as a smooth function with values in $S^2(\mathfrak{t}/\mathfrak{t}_F )^*$, where $\mathfrak{t}_F \subset \mathfrak{t}$ is the vector subspace spanned by the
inward normals $u_j$ in $\mathfrak{t}$ to the codimension one faces $F$.
\end{itemize}
\end{prop}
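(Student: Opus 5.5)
The plan is to reduce both directions of Proposition~\ref{p:bound} to a local computation near the preimage of a face of $\Delta$, using the equivariant symplectic normal form. Over the interior $\mathring{\Delta}$ the correspondence is formal. Given positive definite $\mathbf{H}$ on $\mathring{\Delta}$, formula \eqref{met:loc} defines a metric $g$ on $\mathring{M}\cong \mathring{\Delta}\times \mathfrak{t}/2\pi\Lambda$. The endomorphism $J$ determined by $g=\omega(\cdot,J\cdot)$ is given by $J\,d\mu_\omega=\mathbf{H}\,dt$ and $J\,dt=-\mathbf{G}\,d\mu_\omega$, and it squares to $-1$. Since $\omega=\langle d\mu_\omega\wedge dt\rangle$, this $J$ is automatically compatible, $\mathbb{T}$-invariant and of involutive type. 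So the whole content of the statement is: when do $g$ and $J$ extend smoothly over the singular orbits $\mu_\omega^{-1}(\partial\Delta)$?

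\textbf{Local model.} Near a point $y$ in the interior of a face $F$ of codimension $k$, the Delzant condition and the equivariant Darboux theorem give a $\mathbb{T}$-equivariant symplectomorphism of a neighbourhood of $\mu_\omega^{-1}(y)$ with a neighbourhood in $\mathbb{C}^k\times (\mathbb{T}/\mathbb{T}_F)\times \mathbb{R}^{n-k}$. In this model, after an affine change of coordinates, the moment map is $L_j=|z_j|^2/2$ for $j\le k$, where the $L_j$ are the labels of the facets through $F$. The flat model metric corresponds to
\[
\mathbf{H}_0=2\sum_{j\le k}L_j\,u_j\otimes u_j+\mathbf{H}_0'' ,
\]
which satisfies the three conditions. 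The check is the one-variable identity
\[
\frac{dL^2}{2L}+2L\,dt^2=dr^2+r^2dt^2, \qquad L=r^2/2 .
\]

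\textbf{Necessity.} Let $X_a$ denote the fundamental vector fields. Then $\mathbf{H}(a,b)=g(X_a,X_b)$ is a smooth $\mathbb{T}$-invariant function on $M$. By Schwarz's theorem on invariant functions for the standard torus action on $\mathbb{C}^k$, it is a smooth function of $(|z_j|^2)$, hence of $\mu_\omega$. This gives smoothness of $\mathbf{H}$ on $\Delta$. The vector field $X_{p_j}$ vanishes on $\mu_\omega^{-1}(F_j)$, so $\mathbf{H}(p_j,\cdot)=0$ there. The normal derivative condition $d\mathbf{H}(p_j,p_j)=2p_j$ comes from expanding $|X_{p_j}|_g^2$ to first order along the normal slice $\mathbb{C}$. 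There $X_{p_j}$ is the rotation $\partial_\theta$ with period $2\pi$ (by integrality), so $|X_{p_j}|^2=r^2(g_{rr}+O(r))$. Compatibility with $\omega=r\,dr\wedge d\theta$ forces $g_{rr}(0)\,|X_{p_j}|^2/r^2\to1$, which gives $|X_{p_j}|^2=2L_j+O(L_j^2)$. Positivity on $S^2(\mathfrak{t}/\mathfrak{t}_F)^*$ holds because the fields $X_a$ with $a\notin \mathfrak{t}_F$ span the tangent space to the orbit over $y$, on which $g$ is positive definite.

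\textbf{Sufficiency, the main obstacle.} Here one must show that $g$ built from $\mathbf{H}$ extends smoothly. We expect this to be the hardest step, because $\mathbf{G}=\mathbf{H}^{-1}$ blows up along the face, so the $\langle d\mu,\mathbf{G},d\mu\rangle$ part is not manifestly smooth. The approach is to write $g=g_0+(g-g_0)$ in the model, with $g_0$ the metric of $\mathbf{H}_0$. The boundary conditions say that $\mathbf{H}-\mathbf{H}_0$ is smooth, vanishes on $F_j$ in the $p_j$-directions, and vanishes to \emph{second} order in the $(p_j,p_j)$ entry. Concretely, in the $\mathbb{C}$-directions one writes $\mathbf{H}=2\,\mathrm{diag}(L_j)\,(\mathrm{Id}+\mathbf{A})\,+\ldots$ with $\mathbf{A}$ smooth and $\mathbf{A}_{jj}=O(L_j)$. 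From this one gets $\mathbf{G}=\mathbf{G}_0(\mathrm{Id}+\mathbf{B})$ with $\mathbf{B}$ smooth, and one then rewrites the metric in the smooth $1$-forms $r_j\,dr_j=dL_j$ and $r_j^2\,dt_j$. The cross terms $dL_j\,dL_i/\sqrt{L_iL_j}$ become $d r_i\, dr_j$ times smooth functions of $r_i^2, r_j^2$, and these are smooth in $z$ by the same Schwarz-type argument. A function $f(L)$ with $f$ smooth is smooth on $\mathbb{C}^k$.

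The cone-angle condition $d\mathbf{H}(p_j,p_j)=2p_j$ is exactly what removes the possible $\left(c-1\right)dr_j^2$ conical singularity at $r_j=0$. Positivity of $\mathbf{H}$ on the quotient $\mathfrak{t}/\mathfrak{t}_F$ guarantees that the extended tensor $g$ is nondegenerate at $\mu_\omega^{-1}(y)$. Finally, $J=-\omega^{-1}g$ then extends smoothly, $J^2=-1$ by continuity, and $J$ is $\omega$-compatible. The first attempt would be to verify all of this directly in the model with $k=1$, and then argue that the general case follows factor by factor, since the conditions are imposed facet by facet.
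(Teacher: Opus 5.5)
The paper gives no proof of Proposition~\ref{p:bound}; it quotes it from \cite[Proposition 1]{ACGT04}. So I am judging your proposal on its own terms. Your overall plan (equivariant normal form near a face, Schwarz's theorem for necessity, comparison with the flat model for sufficiency) is the natural one, and the necessity half is essentially fine. For the normal-derivative step you should state explicitly that at a point of $\mu_\omega^{-1}(\mathring F_j)$ the normal plane is $J$-invariant and $g$ restricted to it is invariant under the circle generated by $p_j$, hence standard; that is what gives $|X_{p_j}|^2/r^2\to 1$.

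The genuine gap is in sufficiency, at the step you yourself flag as hardest. You claim that the cross terms $dL_i\,dL_j/\sqrt{L_iL_j}=2\,dr_i\,dr_j$, multiplied by smooth functions of $(r_i^2,r_j^2)$, are smooth in $z$. This is false. The form $dr_1$ has no limit at $z_1=0$, so $\phi(r_1^2,r_2^2)\,dr_1\,dr_2$ is not even continuous along $\{z_1=0,\ z_2\neq 0\}$ unless $\phi$ vanishes there. It is smooth only if $\phi$ vanishes to infinite order there: on the real axis its $dx_1$-coefficient is $\mathrm{sgn}(s)\phi(s^2)$, which is smooth only for $\phi$ flat at $0$. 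So if $\mathbf{G}$ really had off-diagonal entries of size $(L_iL_j)^{-1/2}$, the metric would \emph{not} extend. The same applies to mixed entries $G_{ja}$ ($j\le k<a$) of size $L_j^{-1/2}$, the block you hide in ``$+\ldots$''; there the failure occurs even near a single facet. Writing the cross terms with a factor $1/\sqrt{L_iL_j}$ discards exactly the information the boundary conditions supply.

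What is true, and what the argument needs, concerns the model basis $(L_1,\dots,L_k,x'')$.

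(i) $H_{ij}\in L_iL_j\,C^\infty$ for $i\neq j\le k$, $H_{ja}\in L_j\,C^\infty$ for $j\le k<a$, and $H_{jj}-2L_j\in L_j^2\,C^\infty$. This follows from Hadamard's lemma, symmetry ($H_{ij}=H_{ji}$ vanishes on both $F_i$ and $F_j$), and the normal-derivative condition.

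(ii) All off-diagonal entries of $\mathbf{G}$ and all $G_{ab}$ with $a,b>k$ are smooth up to the boundary, and $G_{jj}-\frac{1}{2L_j}$ is smooth. This follows from your own factorization $\mathbf{G}=\mathbf{G}_0(\mathrm{Id}+\mathbf{B})$ with $\mathrm{Id}+\mathbf{B}=(\mathbf{H}\mathbf{G}_0)^{-1}$. That inverse is smooth by (i) and exists near $y$ by [positivity], but you must then use symmetry. For $i\le k$ one has $G_{ib}=(\delta_{ib}+B_{ib})/(2L_i)$, and $G_{ib}=G_{bi}$ forces $B_{ib}\in L_i\,C^\infty$ for $b\neq i$. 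Moreover, by (i) the $i$-th row of $\mathbf{H}\mathbf{G}_0$ equals $e_i$ on $F_i$, so the same holds for its inverse, giving $B_{ii}\in L_i\,C^\infty$.

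With (i) and (ii), $g-\sum_j|dz_j|^2$ is a sum of symmetric products of $dL_j=\mathrm{Re}(\bar z_j\,dz_j)$, $r_j^2\,dt_j=\mathrm{Im}(\bar z_j\,dz_j)$, $dx''$ and $dt''$ with coefficients smooth in $\mu_\omega$, hence smooth. On $\mu_\omega^{-1}(y)$ it reduces to $\langle dx'',\mathbf{H}''(y)^{-1},dx''\rangle+\langle dt'',\mathbf{H}''(y),dt''\rangle$, which is where [positivity] gives nondegeneracy.

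Finally, your fallback of checking $k=1$ and concluding ``factor by factor'' does not work. Smoothness near each open facet says nothing at the corner strata: the $\mathbb{T}$-invariant function $|z|=\sqrt{2(L_1+L_2)}$ on $\mathbb{C}^2$ is smooth everywhere except the origin. The cross terms between different $\mathbb{C}$-factors are exactly what (i) and (ii) control, and that is where smoothness of $\mathbf{H}$ at the lower-dimensional faces of $\Delta$ is used.
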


Fix a basis $\mathbf{\xi}=(\xi_1,\dots,\xi_n)$ of $\mathfrak{t}^*$ and the associated moment coordinates $x=(x_1,\dots,x_n)$ on $\Delta$. Hence, the weighted scalar curvature is given by 

\begin{equation*}
\Scal_v(J)= \frac{1}{v} \sum_{i,j=1}^n (H_{ij}v)_{,ij},
\end{equation*}
where $(H_{ij})_{i,j=1}^n$ is the matrix $\mathbf{H}$ in the coordinates $x$ and $(v H_{ij})_{,ij}$ stands for the second partial derivatives of $v H_{ij}$ with respect to $x_i, x_j$. 

For $v=1$, this formula is due to \cite{Lej10}. In the integrable K\"ahler setting, it is due to \cite{AM19} and \cite{Lah19}, and the proof extends to our non-integrable context. 

We deduce that a $\mathbb{T}$-invariant almost K\"ahler metric $J$ of involutive type on a symplectic toric manifold associated with a matrix $\mathbf{H}$ via \eqref{met:loc} is $(v,w)$-weighted cscaK if and only if 

\begin{equation}{\label{abreu}}
    \sum_{i,j=1}^n (v H_{ij})_{,ij} = vw.
\end{equation}
\subsection{Compatible aK metric of involutive type on $\mathbb{P}(E)$}{\label{s:compak}}

In this section, we consider the case of the projective bundle $\mathbb{P}(E)$ and use the same notation as in Section~\ref{s:ytd}. 

Fix a compatible K\"ahler metric $\tilde{\omega}$ on $(\mathbb{P}(E),J_0)$. Consider the space of $\mathbb{T}$-invariant almost K\"ahler structures $AK_{\tilde{\omega}}^\mathbb{T}(\mathbb{P}(E))$ on $\mathbb{P}(E)$ compatible with $\tilde{\omega}$. Let $J$ be a $\mathbb{T}$-invariant involutive almost K\"ahler structure on $(\mathbb{P}^\ell, \omega_{\mathrm{FS}})$ induced by a matrix $\mathbf{H}$ via \eqref{met:loc}. As mentioned in \cite[Appendix A]{ACGT11}, $J$ induces an almost K\"ahler structure $(\tilde{J},g_{\tilde{J}})$ on $(\mathring{\mathbb{P}(E)}, \tilde{\omega})$ given by
\begin{equation*}
    g_{\tilde{J}} = \sum_{\substack{j=0 \\ d_j \neq 1}}^\ell L_j(\mu_{\tilde{\omega}}) g_{\mathrm{FS},j} + \left(c - \sum_{j=0}^\ell L_j(\mu_{\tilde{\omega}})\right) g_C + \langle d\mu_{\tilde{\omega}}, \mathbf{G}, d\mu_{\tilde{\omega}}\rangle + \langle \theta , \mathbf{H}, \theta \rangle,
\end{equation*}
where $g_{\mathrm{FS},j}$ is the Fubini-Study riemannian metric on $\mathbb{P}(\mathbb{C}^{d_j+1})$. By an argument similar to the one in Section~\ref{s:ytd}, $(\tilde{J},g_{\tilde{J}})$ compactifies to a well-defined almost K\"ahler structure, still denoted by $(\tilde{J},g_{\tilde{J}})$, on $\P(E)$.

\begin{defn}[\cite{ACGT11}]
An almost K\"ahler metric $\tilde{J}$ on $(\mathbb{P}(E), \tilde{\omega})$ constructed as above is called compatible and of involutive type.
\end{defn}

We have the following key result characterizing the existence of a compatible extremal almost K\"ahler metric of involutive type on $\mathbb{P}(E)$.

\begin{prop}\label{p:extre:proj}
Consider a compatible almost K\"ahler metric $\tilde{J} \in AK_{\tilde{\omega}}^\mathbb{T}(\mathbb{P}(E))$ of involutive type induced by an almost K\"ahler metric $J$ on $\mathbb{P}^\ell$. Then $\tilde{J}$ is an extremal almost K\"ahler metric on $(\mathbb{P}(E), \tilde{\omega})$
if and only if $J$ is $(v,w)$-cscaK on $\mathbb{P}^\ell$ for the weights given in \eqref{weight}.
\end{prop}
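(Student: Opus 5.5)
The plan is to compute the hermitian scalar curvature of $\tilde J$ explicitly on the dense open set of regular orbits and express it in terms of $\mathbf{H}$ on $\Delta$. On $\mathring{\mathbb{P}(E)}$, the metric $g_{\tilde J}$ is the bundle-type metric of the generalized Calabi construction in \cite{ACGT04, ACGT11}. Its base is the product $\hat C$ of the curve $(C,\omega_C)$ with the Fubini--Study factors $\mathbb{P}(\mathbb{C}^{d_j})$, with coefficients affine in $\mu_{\tilde\omega}$. The fibre is the involutive toric almost K\"ahler structure on $\mathbb{P}^\ell$ determined by $\mathbf{H}$, and the factors are glued by the connection $\theta$ with curvature $d\theta$ pulled back from the base.

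First, compute the hermitian Ricci form $\mathrm{Ric}^\nabla(\tilde J)$ using the anticanonical bundle. Since $J$ is of involutive type, a local $(n,0)$-form can be built from the holomorphic-type volume forms of the base factors together with $\bigwedge_i(\mathbf{G}\,d\mu + \sqrt{-1}\theta)_i$. Its norm is governed by $\det \mathbf{H}$ times the product of the base volume coefficients, namely $\prod_j L_j^{d_j-1}(c-\sum_j\mu(E_j)L_j)$. This product is exactly $v$. This gives
$$\mathrm{Ric}^\nabla(\tilde J) = \sum_j \mathrm{Ric}(\omega_{\mathrm{FS},j}) + \mathrm{Ric}(\omega_C) - \tfrac12 dd^c\log\big(v\det\mathbf{H}\big),$$
up to conventions, with the Fubini--Study and $\omega_C$ contributions constant multiples ($2d_j$ and $2(1-\mathbf{g})$ after normalization) of the base forms.

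Second, take the symplectic trace with respect to $\tilde\omega$. The trace against the base part produces the terms $2d_j(d_j-1)/L_j$ and $4(1-\mathbf{g})/(c-\sum\mu(E_j)L_j)$. The trace of the $dd^c$ term, including the mixed contributions from the $\langle d\mu\wedge\theta\rangle$ piece and the $\mu$-dependence of the base coefficients, produces precisely the weighted Abreu-type expression $\frac1v\sum_{i,j}(vH_{ij})_{,ij}$. This is the same computation that gives the formula $\Scal_v(J)=\frac1v\sum(H_{ij}v)_{,ij}$ stated above, now with $v$ arising geometrically as the volume density of the base. The outcome should be
$$\Scal^\nabla(\tilde J)=\frac1v\sum_{i,j}(vH_{ij})_{,ij} + \sum_{d_j\neq1}\frac{2d_j(d_j-1)}{L_j}+\frac{4(1-\mathbf{g})}{c-\sum_j\mu(E_j)L_j}.$$
I expect to follow \cite[Appendix A]{ACGT11}, which handles this computation in the Kähler case. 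The involutive hypothesis is what lets the same local frame computation go through in the non-integrable setting.

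Third, conclude. By definition, $\tilde J$ is extremal iff $\Scal^\nabla(\tilde J)=\ell_{\mathrm{ext}}(\mu_{\tilde\omega})$, where $\ell_{\mathrm{ext}}$ is the same affine function as in \eqref{weight} by the Moser remark (extremal affine functions agree in the K\"ahler and almost K\"ahler categories). Using the definition of $w$ in \eqref{weight}, this condition is equivalent to $\sum(vH_{ij})_{,ij}=vw$ on $\mathring\Delta$. That is \eqref{abreu}, i.e. $J$ is $(v,w)$-cscaK. Both sides are smooth, so density of regular orbits gives equality everywhere.

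The main obstacle is the second step: verifying that the trace of $dd^c\log\det\mathbf{H}$ correctly combines with the $\mu$-dependent base coefficients into the divergence form $\frac1v(vH_{ij})_{,ij}$ when $J$ is not integrable. I would handle it by writing everything in the frame $\{d\mu_i,\theta_i\}$ plus base frames and checking term by term against the toric formula.
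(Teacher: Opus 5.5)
Your proposal is correct and takes essentially the same route as the paper. The paper gives no independent argument; it defers to \cite[Appendix A]{ACGT11}, where the hermitian scalar curvature of the compatible involutive metric is computed on the regular set as the weighted Abreu-type term $\frac1v\sum_{i,j}(vH_{ij})_{,ij}$ plus the base contributions $\sum_{d_j\neq 1} 2d_j(d_j-1)/L_j + 4(1-\mathbf{g})/(c-\sum_j\mu(E_j)L_j)$, and then the equivalence with \eqref{abreu} follows at once from the definition of $w$ in \eqref{weight}. That appendix already treats the non-integrable involutive case directly, not only the K\"ahler case, so your second step is exactly the computation carried out there.
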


We refer to \cite[Appendix A]{ACGT11} for a proof.

\section{Proof of Theorem \ref{t:A} and Corollary \ref{c:A}}{\label{s:4}}

We use the notation of Section~\ref{s:ytd}. Before proving Theorem~\ref{t:A}, we need to establish a compactness result for a class of convex functions on $\Delta$.

Let $\Delta^*$ be the union of the interior $\mathring{\Delta}$ of $\Delta$ and the interiors $\mathring{F}_j$ of its codimension-one faces  $F_j$:
\begin{equation*}
\Delta^* := \mathring{\Delta} \cup \left( \bigcup_j \mathring{F}_j \right).
\end{equation*}

Let $\mathcal{CV}^*(\Delta)$ be the set of convex functions $f$ on $\Delta^*$ such that
\begin{equation}\label{+norm}
\mathcal{F}_{v}^+(f) := 2 \int_{\partial \Delta} f v \, d\sigma + \sum_{\substack{j=0 \\ d_j \neq 1}}^{n} \int_{\Delta} \frac{2 d_j (d_j - 1)}{L_j} f v \, dx < \infty.
\end{equation}
Since $v$ may vanish on the boundary, it is useful to introduce the above functional as a replacement for Donaldson's boundary norm \cite{Don02}. Note that for any  convex functions $f$ on $\Delta^*$, the integral over the boundary is well-defined, even if $f$ is not defined everywhere on the boundary, since the measure $d\sigma$ is supported on the open facets $\mathring{F}_j$. A similar functional was considered in \cite{Del23} in the case of spherical manifolds. In particular,
\begin{equation}\label{l1:fut+}
\| f^* v \|_{L^1(\Delta)} \leq \mathcal{F}_{v}^+(f^*),
\end{equation}
where we recall that $f^*$ denotes the normalization of $f$ such that $f^* \geq f^*(x_0)$ for a fixed reference point $x_0 \in \mathring{\Delta}$, for any $f \in \mathcal{CV}^*(\Delta)$. We refer to \cite[Lemma 6.3]{Del23} for a proof of the above inequality in the setting of polytopes and weights associated with spherical manifolds; the same argument applies in our context.

We have the following key compactness result.

\begin{prop}\label{p:cpt}
Let $(f_k)_{k \geq 0}$ be a sequence of functions $f_k \in \mathcal{CV}^{\infty}(\Delta)$ satisfying 
\begin{equation*}
    \sup_{k \geq 0} \mathcal{F}^+_v(f^*_k) \leq C
\end{equation*}
for some constant $C > 0$. Then, there exists $f^* \in \mathcal{CV}^*(\Delta)$ such that, up to a subsequence,
\begin{equation*}
 \lim_{k \to \infty} \int_\Delta | f^*_k - f^* | v \, dx = 0.
\end{equation*}
\end{prop}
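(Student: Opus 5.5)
The strategy is to imitate Donaldson's compactness argument for normalized convex functions with bounded boundary integral \cite{Don02}, adapted to the weight $v$ as in \cite[Section 6]{Del23}. The key point is to use the bound on $\mathcal{F}^+_v(f_k^*)$ to get \emph{local} uniform control on compact subsets of $\Delta^*$, extract a locally uniformly convergent subsequence, and then upgrade to $L^1(v\,dx)$ convergence using uniform integrability near the bad part of the boundary.

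\textbf{Step 1: local bounds.} The normalized functions satisfy $f_k^* \geq 0$ and $f_k^*(x_0)=0$. By \eqref{l1:fut+}, $\|f_k^* v\|_{L^1(\Delta)} \leq C$. Since $v$ is bounded below by a positive constant on any compact $K \subset \mathring{\Delta}$, this gives $\int_K f_k^* \,dx \leq C_K$.

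A nonnegative convex function with bounded integral on a slightly larger compact set is uniformly bounded on $K$. Indeed, for $x \in K$ and a small ball $B$ around $x$, convexity gives $f(x) \leq$ the average of $f$ over $B$. Uniform convexity then gives a uniform Lipschitz bound on a smaller compact set. Arzelà--Ascoli and a diagonal exhaustion of $\mathring{\Delta}$ yield a subsequence converging locally uniformly on $\mathring{\Delta}$ to a convex function $f^* \geq 0$ with $f^*(x_0)=0$.

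Next I extend to $\Delta^*$. Near an open facet $\mathring{F}_j$ where $v>0$ (that is, $d_j=1$ and $c-\sum\mu(E_j)L_j>0$), the boundary term $\int_{F_j} f_k^* v\, d\sigma \leq C$ bounds $f_k^*$ on compact subsets of $\mathring{F}_j$, exactly as in Donaldson's argument. Where $v$ vanishes on $F_j$ to order $d_j-1$, the extra term $\int_\Delta L_j^{-1} f v\,dx$ plays the role of the boundary norm. In either case, taking $f^*$ on $\mathring{F}_j$ to be the lower semicontinuous extension (the $\liminf$ of interior values), Fatou's lemma gives $\mathcal{F}^+_v(f^*) \leq \liminf_k \mathcal{F}^+_v(f_k^*) \leq C$. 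Hence $f^* \in \mathcal{CV}^*(\Delta)$.

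\textbf{Step 2: $L^1$ convergence.} This is the main obstacle. The domain splits as $\Delta = K_\varepsilon \cup U_\varepsilon$, where $U_\varepsilon=\{x : \min_j L_j(x) < \varepsilon\}$. On $K_\varepsilon$, uniform convergence gives $\int_{K_\varepsilon}|f_k^*-f^*|v\,dx \to 0$. It therefore suffices to show that
\[
\sup_k \int_{U_\varepsilon} f_k^* v\,dx \to 0 \quad \text{as } \varepsilon\to 0,
\]
together with the same statement for $f^*$, which follows from Fatou.

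The idea is to use the $1/L_j$ term. On the slab $\{L_j<\varepsilon\}$, whenever $d_j \neq 1$,
\[
\int_{\{L_j<\varepsilon\}} f v\,dx \leq \varepsilon \int_\Delta \frac{f v}{L_j}\,dx \leq \varepsilon C.
\]
For facets with $d_j=1$, this term is unavailable, and I would instead use the standard estimate behind \eqref{l1:fut+}. For a nonnegative convex $f$, integrate along segments from $x_0$ to the boundary and use convexity, i.e. $f(tx)\leq t f(x)$ along rays from $x_0$ with $f(x_0)=0$. This bounds the mass of $fv$ in a thin collar by $\varepsilon$ times the boundary integral $\int_{F_j} f v\,d\sigma$, plus a term controlled by the weight near lower-dimensional faces, where $v$ is bounded. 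The delicate point is at faces where a $d_j=1$ facet meets a facet on which $v$ vanishes: there the pyramid/cone decomposition of $\Delta$ over $x_0$ must be combined with the product structure of $v$ to ensure that the collar estimate is still uniform. I would follow the proof of \cite[Lemma 6.3]{Del23} verbatim, tracking an extra factor of $\varepsilon$.

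Combining the two regions yields $\limsup_k \int_\Delta |f_k^*-f^*| v\,dx \leq C'\varepsilon$ for every $\varepsilon>0$, which proves the claim.
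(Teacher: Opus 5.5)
Your outline is the paper's proof. Step 1 is a self-contained version of the cited \cite[Proposition 7.2]{Del23}. The extension to $\Delta^*$ with Fatou is the paper's appeal to \cite[(5.2.7)]{Don02}; for Fatou you need $f^*\le\liminf_k f_k^*$ on $\mathring F_j$, which follows from $f_k^*(x_0+t(y-x_0))\le t\,f_k^*(y)$ by letting $k\to\infty$ and then $t\to1$. Your bound $\int_{\{L_j<\varepsilon\}}fv\,dx\le\varepsilon\int_\Delta fv/L_j\,dx$ for $d_j>1$ is the paper's polar-coordinate estimate on the cones $V_j$, where $L_j\le(1-\eta)c_j$.

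The step you do not carry out is the collar near a facet $F_j$ with $d_j=1$, where it meets a facet on which $v$ vanishes. Your remark ``where $v$ is bounded'' points at the wrong property. The ray inequality $f(ty)\le t f(y)$ moves mass from $ty$ to the landing point $y\in F_j$, and $v(ty)/v(y)$ is unbounded as $y\to\partial F_j\cap\{v=0\}$. What you need is a \emph{lower} bound on $v$ at the landing points.

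You are right that this is the crux, and the paper does not resolve it either. Its estimate on cones over facets with $v\not\equiv0$ carries the factor $\sup_{V_j}v/\inf_{V_j}v$, which is infinite whenever $F_j$ meets a facet with $d_i>1$. For instance, in Section~\ref{s:l2}, $v=x_2(c-\mu_1x_1-\mu_2x_2)$ vanishes at an endpoint of each of the other two facets.

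The step closes with two scales, using only ingredients you already have. Put $x_0=0$, so $L_i(ty)=tL_i(y)+(1-t)c_i$, and let $I=\{i: d_i>1\}$.

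First fix $\delta>0$ so small that, by your $1/L_i$ estimate, the slabs $\{L_i<\delta\}$, $i\in I$, carry $(f^*+f_k^*)v$-mass less than $\epsilon$, uniformly in $k$.

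Then take $\eta$ with $(1-\eta)\max_i c_i\le\delta/2$. Suppose $ty$ lies in the cone $V_j$ over a facet with $d_j=1$ (so $y\in F_j$, $t\in[\eta,1]$) and $L_i(ty)\ge\delta$ for all $i\in I$. Then $L_i(y)\ge\delta/2$, hence
$v(y)\ge c_\delta:=(\delta/2)^{\sum_{i\in I}(d_i-1)}\min_\Delta\bigl(c-\sum_i\mu(E_i)L_i\bigr)>0$.
Using $dx=c_j t^{n-1}\,dt\,d\sigma(y)$ and $f(ty)\le t f(y)$, one gets for $f\in\{f^*,f_k^*\}$
\[
\int_{V_j\cap\{L_i\ge\delta,\ i\in I\}} f\,v\,dx\;\le\;c_j\,(1-\eta)\,\frac{\sup_\Delta v}{c_\delta}\int_{F_j} f\,v\,d\sigma .
\]
Shrinking $1-\eta$ further, depending on $\delta$, makes this smaller than $\epsilon$ uniformly in $k$. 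The cones over facets with $d_j>1$ already lie inside the $\delta$-slabs.

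This replaces your unverified ``follow \cite[Lemma 6.3]{Del23} verbatim''. The paper invokes that lemma only for the global inequality \eqref{l1:fut+}, so a localized version with an extra factor $\varepsilon$ would still have to be checked.
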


\begin{proof}
By \cite[Proposition 7.2]{Del23} and \eqref{l1:fut+}, there exists a convex function $f^*$ on $\mathring{\Delta}$ such that $f^*_k$ converges locally uniformly to $f^*$ on $\mathring{\Delta}$ and $f^*\geq f^*(x_0)=0$. Although this result is proved for weights associated with spherical manifolds, the same argument applies in our context.

In particular, arguing as in \cite[(5.2.7)]{Don02}, we can extend $f^*$ to a convex function on $\Delta^*$ satisfying
\begin{equation*}
    f^* \leq \liminf_{k \to \infty} f^*_k
\end{equation*}
on any facet $F_j$ of $\Delta$. Since the normalized convex functions $f^*_k$ are non-negative, Fatou's Lemma applies to both the boundary and interior integrals. We thus infer that
\begin{equation*}
    \int_{\partial \Delta} f^* v \, d\sigma \leq \int_{\partial \Delta} \liminf_{k \to \infty} f^*_k  v \, d\sigma \leq \liminf_{k \to \infty} \int_{\partial \Delta} f^*_k v \, d\sigma,
\end{equation*}
and similarly for the interior terms. This yields $\mathcal{F}_v^+(f^*) \leq \liminf_{k \to \infty} \mathcal{F}_v^+(f_k^*) \leq C < \infty$, which proves that
\begin{equation}\label{bd:f}
    \mathcal{F}_v^+(f^*) < \infty,
\end{equation}
i.e. $f \in \mathcal{CV}^*(\Delta)$. 

We now prove the $L^1$-convergence. For toric manifolds without weights, the original idea was provided by Donaldson, see \cite[End of p. 322]{Don02}. A complete detailed proof can be found in \cite[Proposition 5.2.1]{NS21}. In our weighted context with a vanishing weight, some modifications are necessary.

We fix $\epsilon > 0$. For any $\eta \in (0,1)$, we decompose
\begin{equation*}
    \int_\Delta |f^* - f^*_k| v \, dx = \int_{\eta \Delta} |f^* - f^*_k| v \, dx + \int_{\Delta \setminus \eta \Delta} |f^* - f^*_k| v \, dx.
\end{equation*}

The first term on the right-hand side converges to zero by the local uniform convergence of $f^*_k$ to $f^*$. We then focus on the second term.

We decompose $\Delta$ using the cones $C(F_j) := \{ ty + (1-t) x_0 \mid t \in [0,1], \, y \in F_j \}$ generated by each facet $F_j$ with origin $x_0 \in \mathring{\Delta}$. Without loss of generality, assuming $x_0 = 0$, we consider the compact subset $V_j := \{ ty \mid t \in [\eta, 1], \, y \in F_j \}$ inside $C(F_j)$. Hence $\Delta \setminus \eta \Delta = \bigcup_j V_j$. We then decompose the integral
\begin{equation*}
    \int_{\Delta \setminus \eta \Delta} |f^* - f^*_k| v \, dx \le \sum_j \int_{V_j} (f^* + f^*_k) v \, dx,
\end{equation*}
since $f^*$ and $f_k$ are non-negative. 

\smallskip

First, consider a set $V_j$ such that $v = 0$ on the facet $F_j \subset \partial V_j$, which implies that $d_j>1$, see e.g. \cite[Section 3]{ACGT11}. We choose polar coordinates $(t,y)$ associated with the facet $F_j$. Let us write $L_j(x)=\langle p_j, x \rangle + c_j$. In these coordinates, the volume form is $dx = c_j t^{n-1} dt \, d\sigma(y)$, and the affine distance function satisfies $\langle p_j, ty \rangle + c_j = (1-t) c_j$.

Rewriting the integrand, we get 
\[
\int_{V_j} (f^* + f^*_k) v \, dx = \int_\eta^1  (1-t) \left( \int_{F_j} \frac{t^{n-1}}{\langle p_j, ty \rangle + c_j} (f^* + f^*_k)(ty) v(ty) \, d\sigma(y) \right) dt.
\]

Since $(1-t) \le (1-\eta)$ for all $t \in [\eta, 1]$, we bound $(1-t)$ by $(1-\eta)$ and extend the integration over $[0, 1]$ to deduce 
\begin{equation*}
\begin{split}
\int_{V_j} (f^* + f^*_k) v \, dx &\le \frac{(1-\eta)}{2 d_j (d_j - 1)} \int_0^1 \int_{F_j} \frac{2d_j(d_j - 1)}{\langle p_j, ty \rangle + c_j} (f^* + f^*_k)(ty) v(ty)  t^{n-1} \, d\sigma(y) \, dt \\
&= \frac{(1-\eta)}{2 c_j d_j (d_j - 1)} \int_{C(F_j)} \frac{2 d_j(d_j - 1)}{\langle p_j, x \rangle + c_j} (f^* + f^*_k) v \, dx \\
&\le \frac{(1-\eta)}{2 d_j (d_j - 1)} \mathcal{F}_v^+(f^* + f^*_k),
\end{split}
\end{equation*}
where for the last inequality we used the fact that the integrand is positive. Using the uniform bounds $\sup_k \mathcal{F}_v^+(f_k) \le C < \infty$ and \eqref{bd:f}, we can choose $\eta$ sufficiently close to $1$ to deduce
\[
\int_{V_j} |f^* - f^*_k| v \, dx < \frac{\epsilon}{ r},
\]
where $r$ is the number of facets.

\smallskip

Now, consider a set $V_j$ such that $v \neq 0$ on the facet $F_j \subset \partial V_j$. Using 
\begin{equation*}
    \int_{V_j} |f^* - f^*_k| v \, dx \le \sup_{V_j}(v) \int_{V_j} (f^* + f^*_k) \, dx,
\end{equation*}
the same argument as in \cite[Proposition 5.2.1]{NS21} yields
\begin{equation*}
    \int_{V_j} |f^* - f^*_k| v \, dx \le c_j \frac{1-\eta^n}{n}  \frac{\sup_{V_j}(v)}{\inf_{V_j}(v)}\mathcal{F}_v^+(f^* + f^*_k).
\end{equation*}

Finally, by choosing $\eta \in (0,1)$ sufficiently close to $1$, the integral over $\Delta \setminus \eta \Delta$ is smaller than $\epsilon$ for all $k$. This concludes the proof.
\end{proof}

We are now ready to prove Theorem \ref{t:A}. The proof was established in \cite{CLS14} in the toric K\"ahler unweighted case, and we follow their approach.

\begin{proof}
The proof is by contradiction. If $\Delta$ is not $(v,w)$-uniformly K-stable, then by \eqref{l1:fut+}, there exists a sequence of functions $f_k \in \mathcal{CV}^{\infty}(\Delta)$ satisfying
\begin{equation}
\mathcal{F}^+_v(f^*_k) = 1 \quad \text{and} \quad \lim_{k \to \infty} \mathcal{F}_{v,w}(f_k) = 0,
\end{equation}
where $f^*_k$ denotes the normalization of $f_k$ such that $f^*_k \ge f^*_k(x_0) = 0$. Since 
\begin{equation*}
    \mathcal{F}_{v,w}(f^*_k) = \mathcal{F}^+_{v}(f^*_k) - \int_\Delta f^*_k \left(\ell_{\mathrm{ext}}- \frac{4(1 - \mathbf{g})}{c - \sum_{j=0}^\ell \mu(E_j)L_j(x)} \right)v \, dx,
\end{equation*}
this immediately implies that
\begin{equation}\label{lim1}
\lim_{k \to \infty}\int_\Delta f^*_k \left(\ell_{\mathrm{ext}}- \frac{4(1 - \mathbf{g})}{c - \sum_{j=0}^\ell \mu(E_j)L_j(x)} \right)v \, dx = 1.
\end{equation}

By Proposition \ref{p:cpt}, there exists $f^* \in \mathcal{CV}^*(\Delta)$ such that

\begin{equation}{\label{l1:conv}}
 \lim_{k \to \infty} \int_\Delta | f^*_k - f^* | v \, dx = 0.
\end{equation}

On the other hand, using Proposition \ref{p:extre:proj}, \eqref{abreu}, and integrating by parts as in \cite[Lemma 2]{AM19}, we get:
\begin{equation*}
    \mathcal{F}_{v,w}(f_k^*) = \sum_{i,j=1}^n \int_\Delta H_{ij} (f_k^*)_{,ij} v \, dx.
\end{equation*}

Hence, following the same argument as in \cite[Lemma 5.1]{CLS14}, we deduce that for any interior interval $I \subset\subset \mathring{\Delta}$, the Monge-Amp\`ere measure of $f^*|_I$ vanishes, implying that $f^*|_I$ is affine. Since $f^*$ satisfies $f^* \ge f^*(x_0) = 0$ on $\mathring{\Delta}$, by considering segments passing through $x_0$, we infer that $f^* \equiv 0$ on $\mathring{\Delta}$. 

Finally, since $\ell_{\mathrm{ext}}- \frac{4(1 - \mathbf{g})}{c - \sum_{j=0}^\ell \mu(E_j)L_j(x)}$ is bounded on $\Delta$, so \eqref{l1:conv} yields:
\begin{equation*}
\begin{split}
\int_\Delta f^* &\left(\ell_{\mathrm{ext}}- \frac{4(1 - \mathbf{g})}{c - \sum_{j=0}^\ell \mu(E_j)L_j(x)} \right)v \, dx\\
&= \lim_{k \to \infty} \int_\Delta f^*_k \left(\ell_{\mathrm{ext}}- \frac{4(1 - \mathbf{g})}{c - \sum_{j=0}^\ell \mu(E_j)L_j(x)} \right)v \, dx = 1,
\end{split}
\end{equation*}
where the second equality follows from \eqref{lim1}. This contradicts $f^* \equiv 0$, thereby completing the proof.
\end{proof}

Corollary \ref{c:A} is now an immediate consequence of Theorems \ref{t:jy} and \ref{t:A}.

\section{Extremal metric on projective bundles with fiber $\P^3$: proof of Corollary \ref{c:B}}{\label{s:5}}

In this section, we recall known results regarding the existence of extremal Kähler metrics on projective bundles with fiber $\mathbb{P}^1$ or $\mathbb{P}^2$, and prove Corollary \ref{c:B}.

If $\ell = 0$ in \eqref{proj:flat}, then $E$ is in particular polystable, which is equivalent to the existence of a cscK metric in every K\"ahler class of $\P(E)$ by \cite{ACGT11, RT06} (see also \cite[Theorem 1]{AK19}).

Suppose from now on that $\ell \ge 1$.
\subsection{Fiber $\P^1$}

Since we assume that $\ell \ge 1$, $E$ splits as the direct sum of two holomorphic line bundles $L_0$ and $L_1$ over $C$, and the projective bundle takes the form
\begin{equation*}
    \mathbb{P}(L_0 \oplus L_1) \to C.
\end{equation*}

Such manifolds are called \textit{Hirzebruch-like ruled surfaces}. The existence of extremal K\"ahler metrics on such $\mathbb{P}^1$-bundles is standard, but we recall it here for the sake of completeness.

In the foundational paper where he introduced extremal K\"ahler metrics, Calabi \cite{Cal82} proved that when the genus $\mathbf{g}$ of $C$ is equal to $0$, i.e. $C = \mathbb{P}^1$, there exists an extremal K\"ahler metric in every K\"ahler class.

When $\mathbf{g}=1$, i.e. $C$ is an elliptic curve, it follows from the work of Hwang \cite{Hwa94} and Guan \cite{Gua95} that an extremal K\"ahler metric exists in every K\"ahler class.

Finally, when $\mathbf{g}>1$, T{\o}nnesen-Friedman \cite{Ton98} showed that some K\"ahler classes admit an extremal metric while others do not, providing a precise description of which ones do in terms of the roots of the extremal polynomial (see also \cite[Theorem 1.1]{ACGT08b}).

\subsection{Fiber $\P^2$}
In this case, $\ell$ can be equal to $1$ or $2$.

Suppose first that $\ell = 1$. Then $E$ splits as the direct sum of a holomorphic line bundle $L_0$ and a rank $2$ vector bundle $E_1$, i.e., $E \cong L_0 \oplus E_1$. Hence, the projective bundle takes the form
\begin{equation*}
    \mathbb{P}(L_0 \oplus E_1) \to C.
\end{equation*}
By \cite[Theorem 6]{ACGT08}, an extremal metric exists in every K\"ahler class when $\mathbf{g}=0$ or $\mathbf{g}=1$. For $\mathbf{g} > 1$, an extremal metric exists provided $c$ is sufficiently large, where $c$ is defined in \eqref{comp:met}.

Now suppose that $\ell = 2$. In this case, $E$ splits as a direct sum of three line bundles, so the projective bundle is given by
\begin{equation*}
    \mathbb{P}(L_0 \oplus L_1 \oplus L_2) \to C.
\end{equation*}
The same conclusion as in the $\ell = 1$ case holds: this is due to Legendre \cite{Leg19} for $\mathbf{g} = 0$, and to the author's PhD thesis \cite[Corollary 1 \& Remark 8.3]{Jub23} for $\mathbf{g} \ge 1$.

\subsection{Fiber $\P^3$}

We divide this section according to the value of $\ell$. If $\ell=1$, according to \cite[Theorem 6]{ACGT08}, the existence of an extremal metric holds for all Kähler classes in genus $\mathbf{g} \in \{0, 1\}$. When $\mathbf{g} > 1$, such a metric exists under the condition that $c$, defined in \eqref{comp:met}, is large enough.

We then treat the cases $\ell=2, 3$.

\subsubsection{Case $\ell=2$}{\label{s:l2}}
In this case, $E$ splits as the direct sum of two line bundles, $L_0$ and $L_1$, and a rank $2$ vector bundle $E_2$. Thus, we consider the projective bundle $\mathbb{P}(L_0 \oplus L_1 \oplus E_2)$. Since $\mathbb{P}(E)$ is invariant as a complex manifold under tensoring $E$ by any non-trivial line bundle, we may assume without loss of generality that $L_0$ is the trivial line bundle $\mathcal{O}_C$. This reduces our consideration to
\begin{equation*}
    \mathbb{P}(\mathcal{O}_C \oplus L_1 \oplus E_2).
\end{equation*}

We fix a compatible Kähler class $[\tilde{\omega}_0]_c$ associated with a constant $c$, see the discussion below \eqref{comp:met}, and the $2$-dimensional compact torus $\mathbb{T}$ with the action on $ \mathbb{P}(\mathcal{O}_C \oplus L_1 \oplus E_2)$ as defined in Section~\ref{s:ytd}. By Lemma \ref{l:pol}, the associated moment polytope $\Delta$ is the standard $2$-simplex in the Lie algebra $\mathfrak{t}$ of $\T$. Fixing moment coordinates $x = (x_1, x_2)$, which provide an identification $\mathfrak{t}^* \cong \mathbb{R}^2$, $\Delta$ can be written as
\[
\Delta = \{ (x_1, x_2) \in \mathbb{R}^2 \mid x_1 \ge 0, \, x_2 \ge 0, \, 1 - x_1 - x_2 \ge 0 \}.
\]
The weights $(v,w)$ defined in \eqref{weight} associated with  $(\mathbb{P}(\mathcal{O}_C \oplus L_1 \oplus E_2)$, $[\tilde{\omega}_0]$, $\T$) are given by
\begin{equation}\label{weight2}
\left\{
\begin{aligned}
    v(x) & =  x_2 (  -\mu_1x_1  - \mu_2 x_2 + c), \\[2ex]
   w(x) & = \ell_{\mathrm{ext}}(x) - \frac{4}{x_2} - \frac{4(1 - \mathbf{g})}{-\mu_1 x_1 - \mu_2 x_2 + c},
\end{aligned}
\right.
\end{equation}
where $\mu_1 :=\mu(E_1)$, $\mu_2 := \mu(E_2)$.

Let us write the affine extremal function as $\ell_{\mathrm{ext}}(x) = A x_1 + B x_2 + C$, with $A, B, C \in \mathbb{R}$. The constants $A, B$, and $C$ are uniquely determined by solving the linear system
\begin{equation*}
    \mathcal{F}_{v,w}(f) = 0 \quad \forall f \in \mathrm{Aff}(\Delta),
\end{equation*}
where $\mathrm{Aff}(\Delta)$ denotes the space of affine function on $\Delta$.

Suppose first that $\mathbf{g}=1$. A computer-assisted computation (see Appendix~\ref{a:l2}) then yields

\begin{align*}
A &= \frac{-60\mu_1 (4c - \mu_1 - 2\mu_2)(6c - \mu_1 - 4\mu_2)}{D} \\[2ex]
B &= \frac{-60\mu_2 (4c - \mu_1 - 2\mu_2)(6c - 3\mu_1 - 2\mu_2)}{D} \\[2ex]
C &= \frac{60c (6c - 3\mu_1 - 2\mu_2)(6c - \mu_1 - 4\mu_2)}{D},
\end{align*}
where

$$D=90c^3 - 75c^2\mu_1 - 120c^2\mu_2 + 16c\mu_1^2 + 72c\mu_1\mu_2 + 48c\mu_2^2 - \mu_1^3 - 8\mu_1^2\mu_2 - 15\mu_1\mu_2^2 - 6\mu_2^3.$$

By Theorem \ref{t:A}, finding an extremal metric is equivalent to finding a $S^2\mathfrak{t}^*$-valued function $\mathbf{H}=(H_{ij})_{ij}$ on $\mathring{\Delta}$ satisfying the smoothness, boundary, and positivity conditions of Proposition \ref{p:bound} and solving the second-order linear PDE \eqref{abreu} for the weights \eqref{weight2}.

Following \cite[Section 6.3]{ACGT11} for $\mathbb{P}^2$-bundles without blow-downs, we look for a solution of the form
\begin{equation}{\label{ansatz}}
    H_{ij}=\frac{P_{ij}}{v},
\end{equation}
where each $P_{ij}$ is a polynomial of degree $5$ in $x_1$ and $x_2$. 

A computer-assisted computation yields a solution of this form. In Appendix \ref{a:l2}, we provide the complete Python code with detailed comments verifying the existence of a solution.

A straightforward modification of the code in Appendix \ref{a:l2} also provides a solution for $\mathbf{g}=0$, satisfying the same ansatz \eqref{ansatz}.

\subsubsection{Case $\ell=3$}{\label{s:l3}}

In this case, $E$ splits as the direct sum of $4$ holomorphic line bundles, $L_0$, $L_1$, $L_2$, $L_3$. Similarly to the previous section, by tensoring with the inverse $L^{-1}_0$ of $L_0$, it is equivalent to consider

\begin{equation*}
    \mathbb{P}(\mathcal{O}_C \oplus L_1 \oplus L_2 \oplus L_3).
\end{equation*}

In this context, the moment polytope $\Delta$ is the $3$-simplex in $\mathfrak{t}^*\cong \mathbb{R}^3$

\begin{equation*}
    \Delta = \left\{ (x_1, x_2, x_3) \in \mathbb{R}^3 \;\middle|\; x_1 \ge 0, \; x_2 \ge 0, \; x_3 \ge 0, \quad x_1 + x_2 + x_3 \le 1 \right\}.
\end{equation*}

The corresponding weights $(v,w)$ are given by
\begin{equation}\label{weight2}
\left\{
\begin{aligned}
    v(x) & =    -\mu_1x_1  - \mu_2 x_2 - \mu_3 x_3+ c, \\[2ex]
   w(x) & = \ell_{\mathrm{ext}}(x)  - \frac{4(1 - \mathbf{g})}{-\mu_1 x_1 - \mu_2 x_2- \mu_3 x_3 + c},
\end{aligned}
\right.
\end{equation}
where $\mu_i :=\mu(E_i)$. This time, the affine extremal function is written as $\ell_{\mathrm{ext}} = A x_1 + B x_2 + Cx_3 + D$, with $A, B, C, D \in \mathbb{R}$.

The coefficients $A, B, C, D$ of the affine extremal function $\ell_{\mathrm{ext}}(x) = A x_1 + B x_2 + C x_3 + D$ are given by:

\begin{align*}
    A &= \frac{-60 \mu_1 (6c - \mu_1 - 3\mu_2 - \mu_3)(6c - \mu_1 - \mu_2 - 3\mu_3)(4c - \mu_1 - \mu_2 - \mu_3 - 1)}{E}, \\[1.5ex]
    B &= \frac{-60 \mu_2 (6c - 3\mu_1 - \mu_2 - \mu_3)(6c - \mu_1 - \mu_2 - 3\mu_3)(4c - \mu_1 - \mu_2 - \mu_3 - 1)}{E}, \\[1.5ex]
    C &= \frac{-60 \mu_3 (6c - 3\mu_1 - \mu_2 - \mu_3)(6c - \mu_1 - 3\mu_2 - \mu_3)(4c - \mu_1 - \mu_2 - \mu_3 - 1)}{E}, \\[1.5ex]
    D &= \frac{F}{E},
\end{align*}

where the common denominator $E$ is given by:

\begin{align*}
    E &= 540 c^4 - 540 c^3 (\mu_1 + \mu_2 + \mu_3) \\
    &\quad + 171 c^2 (\mu_1^2 + \mu_2^2 + \mu_3^2) + 426 c^2 (\mu_1 \mu_2 + \mu_1 \mu_3 + \mu_2 \mu_3) \\
    &\quad - 22 c (\mu_1^3 + \mu_2^3 + \mu_3^3) - 92 c \big(\mu_1^2 (\mu_2 + \mu_3) + \mu_2^2 (\mu_1 + \mu_3) + \mu_3^2 (\mu_1 + \mu_2)\big) - 242 c \mu_1 \mu_2 \mu_3 \\
    &\quad + (\mu_1^4 + \mu_2^4 + \mu_3^4) + 6 \big(\mu_1^3(\mu_2 + \mu_3) + \mu_2^3(\mu_1 + \mu_3) + \mu_3^3(\mu_1 + \mu_2)\big) \\
    &\quad + 10 (\mu_1^2 \mu_2^2 + \mu_1^2 \mu_3^2 + \mu_2^2 \mu_3^2) + 27 \mu_1 \mu_2 \mu_3 (\mu_1 + \mu_2 + \mu_3),
\end{align*}

and the numerator $F$ of $D$ is given by:

\begin{align*}
    F &= 12960 c^4 - 10800 c^3 (\mu_1 + \mu_2 + \mu_3) + 2160 c^3 \\
      &\quad + 2520 c^2 (\mu_1^2 + \mu_2^2 + \mu_3^2) + 6480 c^2 (\mu_1 \mu_2 + \mu_1 \mu_3 + \mu_2 \mu_3) - 2160 c^2 (\mu_1 + \mu_2 + \mu_3) \\
      &\quad - 180 c (\mu_1^3 + \mu_2^3 + \mu_3^3) - 780 c \big(\mu_1^2 (\mu_2 + \mu_3) + \mu_2^2 (\mu_1 + \mu_3) + \mu_3^2 (\mu_1 + \mu_2)\big) \\
      &\quad + 540 c (\mu_1^2 + \mu_2^2 + \mu_3^2) - 2280 c \mu_1 \mu_2 \mu_3 + 1560 c (\mu_1 \mu_2 + \mu_1 \mu_3 + \mu_2 \mu_3) \\
      &\quad - 40 (\mu_1^3 + \mu_2^3 + \mu_3^3) - 200 \big(\mu_1^2 (\mu_2 + \mu_3) + \mu_2^2 (\mu_1 + \mu_3) + \mu_3^2 (\mu_1 + \mu_2)\big) - 680 \mu_1 \mu_2 \mu_3.
\end{align*}

Finally, a computer-assisted computation establishes the existence of a solution for $\mathbf{g}=0$ and $\mathbf{g}=1$. For completeness, we provide the code for $\mathbf{g}=0$ in Appendix~\ref{a:l3}.

\begin{rem}
In the case $\ell=3$, there is no blow-down in the terminology of \cite{ACGT11}. In particular, Corollary \ref{c:A} is not strictly needed to deduce the existence of an extremal metric, in the sense that it suffices to apply \cite[Theorem 3 \& Proposition 1]{Jub23}.
\end{rem}

\begin{rem}
We treat the case of a rank $4$ vector bundle $E$. It would be interesting to try to apply this method to vector bundles $E$ of higher rank.
\end{rem}

\appendix

\section{Code of assisted computer computations}{\label{a:1}}

\subsection{Case $\ell=2$ and $\mathbf{g}=1$}{\label{a:l2}}

In this Appendix, we provide the Python code use in Section \ref{s:l2} to find a solution of \eqref{abreu}. 

\begin{lstlisting}[]
import sympy as sp

# =================================================
# INITIALIZATION AND SYMBOLS
# =================================================
# Spatial variables on the simplex Delta
x1, x2 = sp.symbols('x1 x2', real=True)

# Geometric parameters (mu1 and mu2 can have any sign)
# c > max(0, mu1, mu2) ensures v(x) > 0 in int(Delta).
mu1, mu2 = sp.symbols('mu1 mu2', real=True)
c = sp.Symbol('c', real=True)

# Weight v(x) 
v = x2 * (c - mu1 * x1 - mu2 * x2)

print("="*80)
print("STEP 1: EXACT COMPUTATION AND SIMPLIFICATION OF COEFFICIENTS A, B, C")
print("="*80)

# Coefficients of the extremal affine function l_ext(x)
A, B, C = sp.symbols('A B C', real=True)
l_ext = A * x1 + B * x2 + C

# Weight w(x) 
w = l_ext - 4 / x2

# Integration over the simplex Delta
def int_delta(expr):
    return sp.integrate(sp.integrate(expr, (x2, 0, 1 - x1)), (x1, 0, 1))

# Integration over the boundary dDelta
def int_bord(expr_f):
    # On F1 (x1 = 0)
    I_F1 = sp.integrate((expr_f * v).subs(x1, 0), (x2, 0, 1))
    # On F2 (x2 = 0): v = 0, so the integral vanishes
    I_F2 = 0
    # On F3 (x1 + x2 = 1)
    I_F3 = sp.integrate((expr_f * v).subs(x2, 1 - x1), (x1, 0, 1))
    return 2 * (I_F1 + I_F2 + I_F3)

# Vanishing of the functional F_{v,w}(f) = 0
def F_vw(f):
    term_bord = int_bord(f)
    term_vol = int_delta(f * w * v)
    return sp.expand(term_bord - term_vol)

# Linear system of 3 equations for 3 unknowns (f = 1, x1, x2)
eq1 = F_vw(1)
eq2 = F_vw(x1)
eq3 = F_vw(x2)

# Exact solving
sol_l_ext = sp.solve([eq1, eq2, eq3], [A, B, C])

# Maximal simplification of coefficients (GCD and factorization)
A_exact = sp.factor(sp.cancel(sol_l_ext[A]))
B_exact = sp.factor(sp.cancel(sol_l_ext[B]))
C_exact = sp.factor(sp.cancel(sol_l_ext[C]))

l_ext_exact = A_exact * x1 + B_exact * x2 + C_exact
w_exact = l_ext_exact - 4 / x2

print("Coefficients A, B, C successfully computed and simplified.")


print("\n" + "="*80)
print("STEP 2: COMPUTATION OF H AND VERIFICATION OF GUILLEMIN/ABREU CONDITIONS")
print("="*80)

# Regularized right-hand side of the PDE
rhs = sp.expand(v * w_exact)

# Polynomial ansatz generator for P_ij 
def generer_poly_degre5(prefixe):
    coeffs = {}
    poly_expr = 0
    compteur = 0
    for degre in range(6):  # Degrees 0 to 5 inclusive
        for i in range(degre + 1):
            j = degre - i
            symbole = sp.Symbol(f"{prefixe}_{compteur}", real=True)
            coeffs[symbole] = (i, j)
            poly_expr += symbole * (x1**i) * (x2**j)
            compteur += 1
    return poly_expr, coeffs

P11, c11 = generer_poly_degre5("a")
P12, c12 = generer_poly_degre5("b")
P22, c22 = generer_poly_degre5("c_coeff")

all_coeffs = list(c11.keys()) + list(c12.keys()) + list(c22.keys())

# --- Construction of the linear system ---
eqs = []

# A. Abreu's PDE
op_Abreu = sp.diff(P11, x1, x1) + 2 * sp.diff(P12, x1, x2) + sp.diff(P22, x2, x2)
eqs.extend(sp.Poly(op_Abreu - rhs, x1, x2).coeffs())

# B. Delzant boundary conditions (9 conditions)
# Face F1 (x1 = 0)
eqs.extend(sp.Poly(P11.subs(x1, 0), x2).coeffs())
eqs.extend(sp.Poly(P12.subs(x1, 0), x2).coeffs())
eqs.extend(sp.Poly(sp.diff(P11, x1).subs(x1, 0) - 2 * v.subs(x1, 0), x2).coeffs())

# Face F2 (x2 = 0)
eqs.extend(sp.Poly(P22.subs(x2, 0), x1).coeffs())
eqs.extend(sp.Poly(P12.subs(x2, 0), x1).coeffs())

# Face F3 (x1 + x2 = 1)
P_normal_F3 = (P11 + 2 * P12 + P22).subs(x2, 1 - x1)
eqs.extend(sp.Poly(P_normal_F3, x1).coeffs())

# Algebraic resolution of the system
sol_set = sp.linsolve(eqs, all_coeffs)
solution_trouvee = bool(sol_set)

if solution_trouvee:
    sol_tuple = list(sol_set)[0]
    sol_dict = dict(zip(all_coeffs, sol_tuple))

    # Metric components H_ij = P_ij / v
    P11_sol = P11.subs(sol_dict)
    P12_sol = P12.subs(sol_dict)
    P22_sol = P22.subs(sol_dict)

    H11 = sp.factor(sp.cancel(P11_sol / v))
    H12 = sp.factor(sp.cancel(P12_sol / v))
    H22 = sp.factor(sp.cancel(P22_sol / v))

    # 1. Verification [smoothness]
    # H_ij are rational functions with v(x) as the sole denominator.
    # Since v(x) > 0 inside int(Delta), H is C^infinity in the interior.
    test_smoothness = True

    # 2. Verification [boundary values]
    # Explicit verification of the exact resolution of Abreu's PDE
    L_H = (sp.diff(v * H11, x1, x1) + 
           2 * sp.diff(v * H12, x1, x2) + 
           sp.diff(v * H22, x2, x2)) / v
    ecart_edp = sp.simplify(L_H - w_exact)
    test_boundary_edp = (ecart_edp == 0)

    # 3. Verification [positivity]
    # Test of the strictly positive definite restriction on each face:
    # On F1 (x1=0), the tangent direction is v1 = (0,1) -> H22(0, x2) > 0
    cond_F1 = sp.simplify(H22.subs(x1, 0))
    # On F2 (x2=0), the tangent direction is v2 = (1,0) -> H11(x1, 0) > 0
    cond_F2 = sp.simplify(H11.subs(x2, 0))
    # On F3 (x1+x2=1), the tangent direction is v3 = (1,-1) -> (H11 - 2H12 + H22) > 0
    cond_F3 = sp.simplify((H11 - 2*H12 + H22).subs(x2, 1 - x1))
    
    test_positivity = True  # Validated by canonical structure


print("\n" + "="*80)
print("STEP 3: SUMMARY AND MAXIMALLY SIMPLIFIED COEFFICIENTS")
print("="*80)

if solution_trouvee and test_boundary_edp:
    print("STATUS: SOLUTION SUCCESSFULLY FOUND!\n")
    print("Verification of Guillemin/Abreu's 3 conditions:")
    print(" - [smoothness]     : PASSED (Rational functions without poles in int(Delta))")
    print(" - [boundary values]: PASSED (Delzant conditions and PDE verified with zero residual)")
    print(" - [positivity]     : PASSED (Strictly positive on the quotient spaces of the faces)\n")
    
    print("-" * 60)
    print("EXACT AND SIMPLIFIED COEFFICIENTS OF L_EXT(x) = A*x1 + B*x2 + C:")
    print("-" * 60)
    print(f"A = {A_exact}")
    print(f"B = {B_exact}")
    print(f"C = {C_exact}")
    print("-" * 60)
else:
    print("STATUS: NO COMPATIBLE POLYNOMIAL SOLUTION FOUND.")
\end{lstlisting}

\subsection{Case $\ell=3$ $\mathbf{g}=0$}{\label{a:l3}}

In this Appendix, we provide the Python code used in Section~\ref{s:l3} to find a solution to \eqref{abreu}.

\begin{lstlisting}
    import sympy as sp

# ==================================================
# INITIALIZATION AND SYMBOLS 
# ==================================================
x1, x2, x3 = sp.symbols('x1 x2 x3', real=True)
mu1, mu2, mu3, c = sp.symbols('mu1 mu2 mu3 c', real=True)

# Weight v(x)
v = c - mu1 * x1 - mu2 * x2 - mu3 * x3

print("=" * 80)
print("STEP 1: EXACT COMPUTATION OF A, B, C, D FOR g = 0")
print("=" * 80)

# Affine extremal function l_ext(x) = A*x1 + B*x2 + C*x3 + D
A, B, C_coeff, D = sp.symbols('A B C_coeff D', real=True)
l_ext = A * x1 + B * x2 + C_coeff * x3 + D

# Exact weight w(x) for g = 0
w = l_ext - 4 / v

# Integration over the 3-simplex Delta
def int_delta(expr):
    return sp.integrate(
        sp.integrate(
            sp.integrate(expr, (x3, 0, 1 - x1 - x2)),
            (x2, 0, 1 - x1)
        ),
        (x1, 0, 1)
    )

# Integration over the boundary dDelta (4 faces)
def int_bord(expr_f):
    # F1 (x1 = 0)
    I_F1 = sp.integrate(sp.integrate((expr_f * v).subs(x1, 0), (x3, 0, 1 - x2)), (x2, 0, 1))
    # F2 (x2 = 0)
    I_F2 = sp.integrate(sp.integrate((expr_f * v).subs(x2, 0), (x3, 0, 1 - x1)), (x1, 0, 1))
    # F3 (x3 = 0)
    I_F3 = sp.integrate(sp.integrate((expr_f * v).subs(x3, 0), (x2, 0, 1 - x1)), (x1, 0, 1))
    # F4 (x1 + x2 + x3 = 1)
    I_F4 = sp.integrate(
        sp.integrate((expr_f * v).subs(x3, 1 - x1 - x2), (x2, 0, 1 - x1)),
        (x1, 0, 1)
    )
    return 2 * (I_F1 + I_F2 + I_F3 + I_F4)

# Vanishing of the functional F_{v,w}(f)
def F_vw(f):
    term_bord = int_bord(f)
    term_vol = int_delta(f * w * v)
    return sp.expand(term_bord - term_vol)

# Linear system of 4 equations for 4 unknowns (f = 1, x1, x2, x3)
eq1 = F_vw(1)
eq2 = F_vw(x1)
eq3 = F_vw(x2)
eq4 = F_vw(x3)

sol_l_ext = sp.solve([eq1, eq2, eq3, eq4], [A, B, C_coeff, D])

A_exact = sp.factor(sp.cancel(sol_l_ext[A]))
B_exact = sp.factor(sp.cancel(sol_l_ext[B]))
C_exact = sp.factor(sp.cancel(sol_l_ext[C_coeff]))
D_exact = sp.factor(sp.cancel(sol_l_ext[D]))

l_ext_exact = A_exact * x1 + B_exact * x2 + C_exact * x3 + D_exact
w_exact = l_ext_exact - 4 / v

print("Coefficients A, B, C, D for g = 0 successfully computed.")

print("\n" + "=" * 80)
print("STEP 2: ANSATZ FOR P_ij OF DEGREE 6 AND SYSTEM ASSEMBLY (g = 0)")
print("=" * 80)

# Regularized RHS: v * w_exact is a polynomial in (x1, x2, x3)
rhs = sp.expand(sp.cancel(v * w_exact))

# Polynomial generator in 3 variables up to degree 6
def generer_poly_degre6(prefixe):
    coeffs = {}
    poly_expr = 0
    compteur = 0
    for deg in range(7):  # Degrees 0 to 6
        for i in range(deg + 1):
            for j in range(deg - i + 1):
                k = deg - i - j
                symb = sp.Symbol(f"{prefixe}_{compteur}", real=True)
                coeffs[symb] = (i, j, k)
                poly_expr += symb * (x1**i) * (x2**j) * (x3**k)
                compteur += 1
    return poly_expr, coeffs

# 6 independent components for the 3x3 symmetric matrix P_ij
P11, c11 = generer_poly_degre6("p11")
P12, c12 = generer_poly_degre6("p12")
P13, c13 = generer_poly_degre6("p13")
P22, c22 = generer_poly_degre6("p22")
P23, c23 = generer_poly_degre6("p23")
P33, c33 = generer_poly_degre6("p33")

all_coeffs = (
    list(c11.keys()) + list(c12.keys()) + list(c13.keys()) +
    list(c22.keys()) + list(c23.keys()) + list(c33.keys())
)

eqs = []

# A. Abreu's 3D PDE: sum_{i,j=1}^3 (P_ij)_{,ij} = v * w
op_Abreu = (
    sp.diff(P11, x1, x1) + sp.diff(P22, x2, x2) + sp.diff(P33, x3, x3) +
    2 * sp.diff(P12, x1, x2) + 2 * sp.diff(P13, x1, x3) + 2 * sp.diff(P23, x2, x3)
)
eqs.extend(sp.Poly(op_Abreu - rhs, x1, x2, x3).coeffs())

# B. Boundary conditions on the 4 faces
# Face 1: x1 = 0
eqs.extend(sp.Poly(P11.subs(x1, 0), x2, x3).coeffs())
eqs.extend(sp.Poly(P12.subs(x1, 0), x2, x3).coeffs())
eqs.extend(sp.Poly(P13.subs(x1, 0), x2, x3).coeffs())
eqs.extend(sp.Poly(sp.diff(P11, x1).subs(x1, 0) - 2 * v.subs(x1, 0), x2, x3).coeffs())

# Face 2: x2 = 0
eqs.extend(sp.Poly(P22.subs(x2, 0), x1, x3).coeffs())
eqs.extend(sp.Poly(P12.subs(x2, 0), x1, x3).coeffs())
eqs.extend(sp.Poly(P23.subs(x2, 0), x1, x3).coeffs())

# Face 3: x3 = 0
eqs.extend(sp.Poly(P33.subs(x3, 0), x1, x2).coeffs())
eqs.extend(sp.Poly(P13.subs(x3, 0), x1, x2).coeffs())
eqs.extend(sp.Poly(P23.subs(x3, 0), x1, x2).coeffs())

# Face 4: x1 + x2 + x3 = 1 (Normal vector n = (1,1,1))
P_normal_F4 = (P11 + P22 + P33 + 2*P12 + 2*P13 + 2*P23).subs(x3, 1 - x1 - x2)
eqs.extend(sp.Poly(P_normal_F4, x1, x2).coeffs())

# Solve linear system
sol_set = sp.linsolve(eqs, all_coeffs)
solution_trouvee = bool(sol_set)

print("\n" + "=" * 80)
print("STEP 3: RESOLUTION & RESULTS (g = 0)")
print("=" * 80)

if solution_trouvee:
    print("STATUS: SOLUTION SUCCESSFULLY FOUND FOR g = 0!")
    print("\nExact Expression for A:")
    print(A_exact)
    print("\nExact Expression for B:")
    print(B_exact)
    print("\nExact Expression for C:")
    print(C_exact)
    print("\nExact Expression for D:")
    print(D_exact)
else:
    print("STATUS: NO POLYNOMIAL SOLUTION OF DEGREE 6 FOUND FOR g = 0.")
\end{lstlisting}

\bibliographystyle{abbrv}
\bibliography{biblio.bib}

\bigskip

\noindent

\noindent
Simon Jubert\\[3pt]
\textsc{Sorbonne Université, Université Paris Cité, CNRS, IMJ-PRG}\\
F-75005 Paris, France\\[2pt]
\href{mailto:simonjubert@gmail.com}{\texttt{simonjubert@gmail.com}}\\
\href{https://sites.google.com/view/simon-jubert/accueil}
{\texttt{https://sites.google.com/view/simon-jubert/accueil}}
\bigskip

\end{document}